\newcommand{\Z}{{\mathbb Z}}
\newcommand{\Q}{{\mathbb Q}}
\newcommand{\PP}{{\mathbb P}}
\DeclareSymbolFont{cyrletters}{OT2}{wncyr}{m}{n}
\DeclareMathSymbol{\Sha}{\mathalpha}{cyrletters}{"58}
\newtheorem{theorem}{Theorem}[section]
\newtheorem{proposition}[theorem]{Proposition}
\newtheorem{lemma}[theorem]{Lemma}
\theoremstyle{definition}
\newtheorem{definition}[theorem]{Definition}
\newtheorem{remark}[theorem]{Remark}
\newtheorem{algorithm}[theorem]{Algorithm}
\newtheorem{example}[theorem]{Example}
\newcommand{\gp}{\ensuremath{\mathfrak{p}{}}\xspace}
\providecommand{\mb}[1]{\mathbb{ #1} }
\providecommand{\mc}[1]{\mathcal{ #1} }
 \providecommand{\floor}[1]{\left \lfloor #1 \right \rfloor }
\renewcommand{\phi}{\varphi}
\begin{document}
\date{29th November, 2021}
\title[Kolyvagin classes]{Explicit realization of elements of the Tate-Shafarevich group constructed from Kolyvagin classes}

\author{Lazar~Radi\v{c}evi\'{c}}
\address{Max Planck, Institute for Mathematics, Vivatsgasse 7,
53111 Bonn,
Germany}
\email{lazaradicevic@gmail.com}

\begin{abstract}
    We consider the Kolyvagin cohomology classes associated to an elliptic curve $E$ defined over $\Q$ from a computational point of view. We explain how to go from a model of a class as an element of $(E(L)/pE(L))^{\mathrm{Gal}(L/\Q)}$, where $p$ is prime and $L$ is a dihedral extension of $\Q$ of degree $2p$, to a geometric model as a genus one curve embedded in $\mb{P}^{p-1}$. We adapt the existing methods to compute Heegner points to our situation, and explicitly compute them as elements of $E(L)$. Finally, we compute explicit equations for several genus one curves that represent non-trivial elements of $\Sha(E/\Q)[p]$, for $p \leq 11$, and hence are counterexamples to the Hasse principle.

\end{abstract}
\maketitle

    \section{Introduction}
	 Let $E/\mathbb{Q}$ be an elliptic curve of conductor $N$, with a fixed modular parametrization $\phi : X_{0}(N) \xrightarrow{} E$. Let $K$ be an imaginary quadratic field satisfying the Heegner hypothesis: all prime factors of $N$ split in $K$. Let $H$ be the Hilbert class field of $K$. Using the theory of complex multiplication and the modular parametrization $\phi$, one defines certain points in $E(H)$, known as Heegner points.

	Let us fix an odd prime $p$. Kolyvagin (\cite{kolyvagin1989finiteness}) has used Heegner points to construct certain elements of the $p$-Selmer group $\mathrm{Sel}^{p}(E/\mb{Q})$. The images of these elements in the Tate-Shafarevich group $\Sha(E/\Q)[p]$, under the natural map $\mathrm{Sel}^{p}(E/\mb{Q}) \xrightarrow{} \Sha(E/\mb{Q})[p]$, are known as Kolyvagin classes. It is a standard fact, due to Cassels (\cite{cassels1962arithmetic}), that elements of $\Sha(E/\Q)[p]$ can be represented by genus one curves embedded in $\mb{P}^{p-1}$.
	
	The main result of this paper is an algorithm (divided into Algorithm \ref{mini algortihm} and Algorithm \ref{heegner point algorithm}) to compute such representations for the Kolyvagin classes in $\Sha(E/\Q)[p]$, and thus obtain explicit counterexamples to the Hasse principle. In Section \ref{koly example section} we then use these algorithms to compute explicit equations for smooth genus curves embedded in $\mb{P}^{p-1}$, that have points over every completion of $\Q$, but no points defined over $\Q$ for primes $p \leq 11$.

		These calculations are especially interesting if $p>5$, and the curve $E$ does not admit a $p$-isogeny. The standard method to compute such counter examples to the Hasse principle is to use the  method of complete $p$-descent to compute the entire $p$-Selmer group $\mathrm{Sel}^{(p)}(E/\Q)$. However, when $p>5$, this is not feasible in practice, as one runs into difficulties with computing class groups of very large number fields. Our method does not run into this problem, and in particular, in Section \ref{koly example section}, Example \ref{7 example},  we  compute the first known explicit realization of a non-trivial element of $\Sha(E/\mathbb{Q})[7]$ for an elliptic curve $E$ that does not have a $7$-isogeny. 
		
		These classes have already been studied from a computational point of 
		view by Jetchev, Leuter and Stein in \cite{JLS}. They are able to compute representations of these classes as  elements of $E(L)/pE(L)$, where $L$ is a certain abelian extension of $K$. However, their aim is only to test whether these classes are non-zero, for which this representation is sufficient, whereas we compute explicit equations defining the corresponding homogeneous space.

		 The problem of computing these equations breaks up into two problems. First, given a suitable elliptic curve $E$, a discriminant $D$ and a prime $p$, we compute a Heegner point $x_K$, defined over a certain dihedral extension of $\mb{Q}$. Our method for doing this is Algorithm \ref{heegner point algorithm}. To this point we associate a Kolyvagin class $c \in \mathrm{Sel}^{(p)}(E/\mb{Q})$. Algorithm \ref{mini algortihm} then represents this class by a genus one curve $C \subset \mb{P}^{p-1}$. The main difficulty in our computations is caused by the fact that typically the Heegner points $x_K$ have very large height, making them hard to compute and work with. We note that despite this, the output of Algorithm \ref{mini algortihm} is a model for the curve $C$ with small integral coefficients, i.e. a \textit{minimized} and \textit{reduced} model, in the sense of \cite{minred234}.
	
			\textbf{Acknowledgements.} The author thanks his PhD advisor Tom Fisher, for suggesting the problem and for his patient guidance along the way. Moreover the author thanks his thesis examiners Jack Thorne and Vladimir Dokchitser for their helpful comments.  This work is based on Chapter 7 of the author's PhD thesis \cite{LazarThesis}. The author is grateful to Trinity College and Max Planck Institute for Mathematics for their financial support.
			
		\section{Background on Kolyvagin classes and statement of results} \label{koly background}
		In this section we review basic material from the theory of Heegner points. The main references are the articles of Gross, \cite{gross} and \cite{gross1984heegner}, as well as \cite{weston2015euler}, \cite{watkins2005some} and Chapter 8 of \cite{cohen2008number}.

		\subsection{Heegner points on modular curves.}
		For $N \geq 1$ an integer, let $Y_0(N)$ be the open modular curve, defined over $\mb{Q}$. The $\mb{C}$-points of $Y_0(N)$ classify isomorphism classes of cyclic $N$-isogenies $E \xrightarrow{} E'$, defined over $\mb{C}$. Fix an imaginary quadratic field $K$ satisfying the Heegner hypothesis: every prime dividing $N$ splits completely in $K$. It follows that there exists an ideal $\mc{N}$ of  the ring of integers $\mc{O}_K$ with $\mc{N}\bar{\mc{N}}=N \mc{O}_K$, and hence $\mc{O}_{K}/ \mc{N} \cong \Z / N \Z$.
		
        Given such an ideal $\mc{N}$, an ideal class $[\mathfrak{a}] \in \mathrm{Cl}(\mc{O}_K)$ determines a map of complex torii $\mb{C}/\mathfrak{a} \xrightarrow{} \mb{C}/\mathfrak{a}\mc{N}^{-1}$. Since we have $\mathfrak{a}\mc{N}^{-1}/\mathfrak{a} \cong \mb{Z}/N\mb{Z}$, this map is a cyclic $N$-isogeny, and determines a point in $Y_0(N)(\mb{C})$.  This is defined to be the Heegner point associated to the triple $(\mc{O}_K,[\mathfrak{a}],\mc{N})$.

		\subsection{Rationality of Heegner points.} \label{cm background} A key property of Heegner points, implied by the theory of complex multiplication, is that they are defined over abelian extensions of the field $K$. More precisely, let $(\mc{O}_K,[\mathfrak{a}],\mc{N})$ be a Heegner point on $Y_0(N)$. This point is defined over the Hilbert class field $H$ of $K$. See $\S5$ of \cite{gross1984heegner}.  The key point is that both $\mb{C}/\mathfrak{a}$ and $\mb{C}/\mathfrak{a}\mc{N}^{-1}$ have complex multiplication by $\mc{O}_K$.  This is a consequence of the Shimura reciprocity law, as explained in Chapter 6.8  of \cite{shimura1971introduction}, or Chapter II of \cite{Sil2}. 
		
		The  field $H$ is an abelian extension of $K$, and the Artin map provides a canonical isomorphism $F:\mathrm{Cl}(\mc{O}_K) \xrightarrow{} \mathrm{Gal}(H/K)$. Explicitly, by Shimura reciprocity, for an ideal class $[\mathfrak{b}]$ we have 
		\[
		(\mc{O}_K,[\mathfrak{a}],\mc{N})^{F([\mathfrak{b}])}=	(\mc{O}_K,[\mathfrak{ab^{-1}}],\mc{N}).
		\]
	Suppose that $\tau\in \mathrm{Gal}(H/\mb{Q})$ is a lift of complex conjugation. The action of $\tau$ is given by  
		\[
		(\mc{O}_K,[\mathfrak{a}],\mc{N})^{\tau}=(\mc{O}_K,[\tau(\mathfrak{a})],\tau(\mc{N})).
		\]
		
		\subsection{Heegner points on elliptic curves and Kolyvagin classes.} Now let $E$ be an elliptic curve defined over $\mb{Q}$, of conductor $N$. Let $X_0(N)$ be the compactified modular curve of level $N$. By the modularity theorem (see \cite{breuil2001modularity}), there exists a modular parametrization map $\phi : X_0(N) \xrightarrow{} E$. For every discriminant $D$ that satisfies the Heegner condition, we fix an ideal $\mc{N}$ with $N \mc{O}_K=\mc{N} \bar{\mc{N}}$, and define the Heegner point $x_{K} \in E(H)$ by setting $x_K=\phi(\mc{O},[\mathfrak{a}],\mc{N})$. We also define the basic Heegner point $y_K \in E(K)$ by setting $y_K=\mathrm{Tr}_{H/K} x_K$. 	
		
		Let $p>2$ be a prime such that $E(H)[p]$ is trivial, $y_K \in pE(K)$ and $p$ divides $|\mathrm{Cl}(\mc{O}_K)|=|H:K|$ exactly once. These assumptions are fairly mild, as we will see later. Then there exists a unique degree $p$ subfield of $H$, which we denote by $L$. Let $z_{K}=\mathrm{Tr}_{H/L} x_{K}$, and let $\sigma$ be a generator of $G=\mathrm{Gal}(L/K).$	Define the operators $D_{\sigma}$ and  $\mathrm{Tr}$ in $\mathbb{Z}[G]$ by  
		\begin{equation*}
			D_{\sigma}=\sum_{i=1}^{p-1} i \sigma^i, \quad  \mathrm{Tr}=\sum^{p-1}_{i=0} \sigma^i.
		\end{equation*}
		The operator $D_{\sigma}$ is known as the \textit{Kolyvagin derivative} and $\mathrm{Tr}$ is the trace operator. They satisfy the identity
		\begin{equation*}
			(\sigma-1)D_{\sigma}=p-\mathrm{Tr},
		\end{equation*}
		We define the \textit{derived} Heegner point $P$ as $P=D_{\sigma} 
		\cdot z_K$. The class $[P] \in E(L)/pE(L)$ is invariant under the 
		action of $G$, since we have
		\[
		(\sigma-1)(D_{\sigma} \cdot z_K)=p z_K- \mathrm{Tr} (z_K)=p z_K-\mathrm{Tr}_{H/K} (x_K)=p z_K-y_K,
		\]
		and by assumption $y_K \in pE(K)$. The Kummer map $\delta : E(L)/p(L) \xrightarrow{} H^1(L,E[p])$is compatible with the Galois action, and so we can define a cohomology class $c_L \in H^1(L,E[p])^{\mathrm{Gal}(L/K)}$ by $c_L=\delta([P])$. We have the inflation-restriction exact sequence
		\begin{equation*}
			H^1(L/K, E[p](L)) \xrightarrow{\mathrm{inf}} H^1(K,E[p]) \xrightarrow{\mathrm{res}} H^1(L,E[p])^{\mathrm{Gal}(L/K)} \xrightarrow{} H^2(L/K, E[p](L)).
		\end{equation*}
		As $E[p](L)$ is trivial, the two outermost groups are trivial, and the restriction map defines an isomorphism $\mathrm{res}: H^1(K,E[p]) \xrightarrow{} H^1(L,E[p])^{\mathrm{Gal}(L/K)} $. We define $c \in H^1(K,E[p])$ to be the preimage of $c_L$ under the restriction map. The class $c$ is in fact an element of the $p$-Selmer group $\mathrm{Sel}^{(p)}(E/K)$, see Prop. 6.2 of \cite{gross}. Finally, let $d$ be the image of $c$ in $H^1(K,E)$. Then $d$ is an element of $\Sha(E/K)[p]$.

	\subsection{Descent from $K$ to $\mb{Q}$.}
	Let $\epsilon$ be the sign of the functional equation of $E/\mathbb{Q}$. The proof of Proposition 5.4 in \cite{gross} shows that the class $c$ lies in the $\epsilon$-eigenspace for the action of complex conjugation on $H^1(K,E[p])$. Thus, if $E$ is a curve of rank $0$, $c$ is fixed by complex conjugation, and by the same inflation-restriction argument we naturally obtain an element of $H^1(\mathbb{Q},E[p])$, which we will also call $c$.
	
	As $E$ has no non-trivial $p$-torsion and rank 0, the group $E(\mathbb{Q})/pE(\mathbb{Q})$ is trivial, and hence if $c$ is non-zero, its image $d$ in $H^1(K,E[p])$ will be a non-trivial element of $\Sha(E/\mathbb{Q})[p]$. Tracing through the isomorphisms used to define $c$, we see that the class $c$ is non-zero if and only if the point $P$ is not divisible by $p$ in $E(L)$.

		\subsection{Galois cohomology and $n$-diagrams.}
		Let $F$ be a number field, $E/F$ an elliptic curve and $n \geq 1$ an integer. We briefly recall a few standard facts about the Galois cohomology groups $H^1(F,E)$ and $H^1(F,E[n])$, see for example \cite{descentpaper}. 
		
	A torsor under $E$ is a smooth projective curve $T/F$, together with a regular simply transitive action of $E$ on $T$.	An isomorphism of torsors $C_1$ and $C_2$ is an isomorphism of curves $C_1$ and $C_2$ that respects the action of $E$. The left action of $E$ on itself by translations makes $E$ a torsor, which we call the trivial torsor. There is a natural identification of the group $H^1(F,E)$ with the set of isomorphism classes of torsors defined over $F$, and the trivial torsor $E$ corresponds to the identity element.
		
		We will also need the following interpretation of the group $H^1(F,E[n])$. We define a diagram $[C \xrightarrow[]{} S]$ to be a morphism from a torsor $C$ to a variety $S$. An isomorphism of diagrams $[C_1 \xrightarrow{} S_1] \sim [C_2 \xrightarrow{} S_2]$ is an isomorphism
		of torsors $\phi : C_1 \cong C_2$ together with an isomorphism of varieties $\psi : S_1 \cong S_2$ making the diagram
		
		\[ \begin{tikzcd}
			C_1 \arrow{r}{} \arrow[swap]{d}{\phi} & S_1 \arrow{d}{\psi} \\%
			C_2 \arrow{r}{}& S_2
		\end{tikzcd}
		\]
		commute. We define the trivial $n$-diagram to be the diagram $[E \xrightarrow[]{} \mb{P}^{n-1}]$ where the morphism is induced by the complete linear system of the divisor $n \cdot 0_E$, and in general, we say a diagram $[C \xrightarrow[]{} S]$ is an $n$-diagram if it is defined over $F$, but isomorphic to the trivial diagram over the algebraic closure $\bar{F}$, i.e. a twist of the trivial diagram. The set of isomorphism classes is also naturally identified with $H^1(F,E[n])$.
		
        The group law on $E$ induces a summation map $\mathrm{sum} : \mathrm{Div} E \xrightarrow[]{} E$, given by $\sum n_p \cdot (P) \mapsto \sum n_pP.$ Two divisors $D$ and $D'$ of the same degree are linearly equivalent if and only $\mathrm{sum}(D)=\mathrm{sum}(D')$. The Kummer map $\delta : E(F)/nE(F) \xrightarrow{}H^1(F,E[n])$ sends a class $[P] \in E(F)/nE(F)$ to the isomorphism class of the $n$-diagram $[E \xrightarrow[]{} \mb{P}^{n-1}]$, where the map is induced by the complete linear system of any degree $n$ divisor $D$ with $\mathrm{sum}(D)=P$.
	
		In this article we consider only $n$-diagrams of the form $[C \xrightarrow[]{} \PP^{n-1}]$. When $n \geq 3$, such an $n$-diagram is a closed embedding, and its image is a smooth projectively normal curve $C$ of genus one and degree $n$. If $n=3$, $C$ is a plane cubic. For $n \geq 4$, the ideal defining $C$ is generated by $n(n-3)/2$ quadrics, see \cite[Prop.~5.3]{g1inv}. Finally, as a consequence of class field theory, under the above identification, the elements of the $n$-Selmer group of $E$ can be represented by $n$-diagrams of the form $[C \xrightarrow[]{} \mb{P}^{n-1}]$.
		
		\subsection{Summary of the setup and the statement of results.} \label{summary subsection}
		Our starting data is an elliptic curve $E/\Q$ of rank $0$ and an odd prime $p$ for which the Birch and Swinnerton-Dyer conjecture predicts that the group $\Sha(E/\Q)[p]$ is non-trivial. To construct a Kolyvagin class, we also need to find a discriminant $D$ of an imaginary quadratic field $K$ with Hilbert class field $H$ that satisfies the following: $D$ satisfies the Heegner hypothesis, $E(H)[p]$ is trivial, $p$ divides $|\mathrm{Cl}(\mc{O}_K)|=|H:K|$ exactly once, the rank of $E/K$ is 0, and the basic Heegner point $y_K$ is divisible by $p$ in $E(K)$. 
		
		\begin{remark}
		For given $E$ and $p$, it is usually easy to find a discriminant $D$ that satisfies these conditions. In practice, a naive search will usually find plenty of discriminants that are easily seen to satisfy all conditions but the last one, and a famous theorem of Kolyvagin (Theorem 1.3 of \cite{gross}) then often guarantees that we must have $p|y_K$. 
		\end{remark}
		
		Starting from the data of $E,p$ and $D$, we compute a $p$-diagram representing the Kolyvagin class $c \in \mathrm{Sel}^{(p)}(E/\Q)$ defined above. There are two main steps. Algorithm \ref{heegner point algorithm} computes the Heegner point $x_K$ as an element of $E(H)$, and using this data Algorithm \ref{mini algortihm} then computes the equations defining a genus one normal curve $C \subset \mb{P}^{p-1}$, and the inclusion $C \subset \mb{P}^{p-1}$ is the $p$-diagram representing the class $c$. If this class is non-trival, the curve $C$ is a counter-example to the Hasse principle. We are able to succesfully use these algorithms for various elliptic curves with $p=3,5$ and $7$, and we give examples in Section \ref{koly example section}. Note that the examples with $p=3$ and $p=5$ can also be obtained by the method of $p$-descent, but Example \ref{7 example} with $p=7$ is out of reach of $p$-descent at the moment, and is the first such example with no 7-isogeny to our knowledge.

			\begin{remark}
		If the curve $E$ has rank 1 over $\Q$, then the class $d$ is in the $(-1)$-eigenspace of complex conjugation, and hence is obtained as the restriction of an element of $\Sha(E_{D}/\mb{Q})[p]$, where $E_D$ is the quadratic twist of $E$ by $D$. If this quadratic twist has rank 0, then by the same argument, the class $d$ is non-zero if and only if the class $c$ is. 
		
		Our method applies in this case as well, and in fact we are able to compute an example (Example \ref{11 example}) with $p=11$, i.e. a genus one normal curve $C  \subset \mb{P}^{10}$ that is a counter-example to the Hasse principle. We suspect that $p$ could be increased even further, since computing the Heegner point appears to be much easier in this case. As the Kolyvagin class is naturally an element of $\Sha(E_D/\Q)$ for the quadratic twist $E_D$ of the curve $E$ we begin with, it seems difficult to use this version of our method as a tool to compute $\Sha(E/\mb{Q})[p]$ of a given curve $E$.
		\end{remark}

		\section{Computing the Heegner point}

		In this section we describe the algorithm we will use to compute the Heegner points needed to define the Kolyvagin class.
		
	    \subsection{Computing the modular parametrization}
		We briefly recall how to compute a modular parametrization of an elliptic curve, following \cite{cohen2008number} and \cite{JLS}. Let $E/\mathbb{Q}$ be an elliptic curve of conductor $N$, $p$ an odd 
		prime, and let $K$ be an imaginary quadratic 
		field. We assume that the maximal order $\mc{O}_K$ of $K$ is of  discriminant $-D\neq3,4$ and that $K$ satisfies the Heegner hypothesis: 
		all prime factors of $N$ split completely in $\mc{O}_K$. We fix an ideal $\mc{N}$ with $N \mc{O}_K= \mc{N}\bar{\mc{N}}$. Let $H$ be the Hilbert class field of $K$, and fix a modular parametrization $\phi : X_0(N) \xrightarrow{} E$ that 
		maps the cusp $\infty$ to the origin of $E$. We assume that $H$ has unique subfield of degree $p$ over $K$, denoted $L$. The Heegner point 
		$x_K$ is defined to be the image 
		$x_K=\phi(\mc{O}_K,[\mc{O}_K],\mc{N}) \in E(H)$, and we set $z_{K}=\mathrm{Tr}_{H/L} x_{K}$. 

		Following \cite{JLS}, we give an explicit description of the map $\phi$. Let $\Lambda$ be the period lattice associated to $E$, and  let $f\in S_2(\Gamma_0(N))$ be the newform associated to $E$. Let $\mc{H}^{*}=\mc{H}\cup \mathbb{Q} \cup \{\infty\}$ be the extended upper half plane, and identify the modular curve $X_0(N)$ with the quotient $\mc{H}^*/\Gamma_0(N)$. The modular parametrization map $\phi:X_0(N) \xrightarrow{} \mathbb{C}/\Lambda$ is given by integrating the holomorphic differential $f(z)dz$ on $X_0(N)$. We can compute it using the power series
		\begin{equation} \label{modular parametrization power series eq}
		\phi(\tau)=\int^{\infty}_{\tau} f(z)dz=\sum_{n \geq 1} \frac{a_n}{n} e^{2\pi i n \tau},
		\end{equation}
		where $f=\sum_{n\geq 1}a_n q^n$ is the Fourier expansion of $f$. To obtain a parametrization $X_0(N) \xrightarrow{} E$, we compose with the uniformization  $\psi=\mathbb{C}/\Lambda \xrightarrow{} E(\mb{C})$. The map $\psi$ is defined using the Weierstrass $\wp$-function, and is is easy to compute numerically to high precision. 
		
		The Artin map provides us with an isomorphism between the class group $\mathrm{Cl}(\mc{O}_K)$ and the Galois group $\mathrm{Gal}(L/K)$. We first compute a set of representatives $\mathfrak{a}_1,...,\mathfrak{a}_m$ for $\mathrm{Cl}_{K}$. Let $\sigma_i$ be the image of $\mathfrak{a}_i$ under the Artin map.  The Galois conjugates of the point $x$ corresponding to the isogeny $[\mathbb{C}/\mathcal{O}_{K} \xrightarrow{} \mathbb{C}/\mathcal{N}^{-1}]$ are given by
		\begin{equation} \label{galois action on hp}
		\sigma_i(x)= [\mathbb{C}/\mathfrak{a}^{-1}_i \xrightarrow{} \mathbb{C}/\mathfrak{a}^{-1}_i \mathcal{N}^{-1}].
		\end{equation}
		
		For every conjugate we compute a corresponding point $\tau$ in the 
		upper half plane.  Fix an embedding $i$
		of $L$ into $\mathbb{C}$. As the morphism $\phi$ is defined over 
		$\mathbb{Q}$,  we can use the above description of the Galois action to compute the coordinates of 
		the Galois conjugates $\sigma_i(x_K)$,  to 
		whatever precision we like, and then do the same for the point $z_K=\mathrm{Tr}_{H/L}x_K$. 
		
		\subsection{Recognizing the Heegner point using lattice reduction}
		
	    We now discuss how to use the LLL algorithm to recover the point $z_K$ from the data computed in the previous section. Recognizing an algebraic number from floating point approximations is a well-studied problem, and in the setting of Heegner points has been considered in \cite{JLS}. An algorithm 
		 similar to the one they propose has been implemented in MAGMA by Steve 
		 Donnelly. For our purposes however, their method is too slow to handle 
		 the case when $p \geq 5$,  so in this section we propose a variant to 
		 this method that seems to work quite well in this setting.
	    
	    Let $L$ be a number field of degree $n$, with $n$ complex embeddings $\sigma_1,\ldots,\sigma_{n}$, and let $\alpha_1,...,\alpha_{n}$ be a $\Z$-basis of the ring integers $\mathcal{O}_{L}$. In our application $L$ will be a dihedral extension of $\Q$ of degree $2p$.
		
		\begin{definition} \label{recognition matrix def}
			Let $\epsilon >0$, $C=10^B$ for an integer $B>0$, $z \in L$  and let $z_1,...,z_{n} \in \mathbb{C}$ be such that $|\sigma_i(z) -z_i|<\epsilon$. Let $\alpha_{ij} \in \mathbb{C}$ be such that $|\sigma_j(\alpha_i)-\alpha_{ij}|<\epsilon$. To this data we associate the $2n\times3n$ integer matrix $A_{z,\epsilon,C}$:
		
				\[
			\begin{pmatrix}
			1 & 0  & \hdots & 0 & 0 &  \floor{C\alpha_{1,1}}  &\hdots & \floor{C\alpha_{1,n}}\\
			\vdots & \vdots & \ddots  & \vdots & \vdots & \vdots &\hdots &  \vdots\\
			\vdots & \vdots & \hdots &\vdots & \vdots &  \floor{C\alpha_{n,1}} &\hdots &\floor{C\alpha_{n,n}} \\
			\vdots & \vdots & \hdots &\vdots & \vdots &  \floor{C\alpha_{1,1} z_1} &\hdots &\floor{C\alpha_{1,n} z_{n}} \\
			
			\vdots & \vdots  & \ddots & \vdots & \vdots & \vdots & \vdots &\vdots\\
			0 & 0 & \hdots & 0 & 1 &\floor{C \alpha_{n,1} z_1}& \hdots&\floor{C\alpha_{n,n} z_{n}}
			\end{pmatrix}
			\]
			i.e. the left $2n\times 2n$-block is the $2n\times 2n$ identity matrix, and the right $2n\times n$-second block splits into the upper $n \times n$ block $(\floor{C\alpha_{ij}})_{ij}$ and the lower $n \times n$ block $(\floor{C\alpha_{ij} z_j})_{ij}$. We define $L_{z,\epsilon,C}$ to be the lattice in $\mathbb{R}^{3n}$ spanned by the rows of $A_{z,\epsilon,C}$.
		\end{definition}
		
		To recover $z$ from  $A_{z,\epsilon,C}$, our strategy is to use the LLL lattice reduction algorithm to find short vectors in the lattice $L_{z,\epsilon,C}$. We take $C=10^B$ to be a large constant and $\epsilon$ as small as possible. Let the rows of $A_{z,\epsilon,C}$ be $r_1,r_2,...,r_{2n}$. The lattice reduction algorithm gives us integers $u_1,u_2,...,u_{2n}$ such that the row vector $\sum^{2n}_{i=1} u_i r_i$ is "small", and if $\epsilon$ is small enough, we hope that we have 
		\begin{equation*}
		0=u_1\alpha_1+u_2\alpha_2+...+u_n \alpha_{n}+u_{n+1}\alpha_{2}z+u_{n+2}\alpha_{2}z+...+u_{2n}\alpha_{n}z,
		\end{equation*}
		If this is the case, we can recover $z$ from the expression 
		\begin{equation*} 
		z=-\frac{\sum^{n}_{i=1} u_i \alpha_i}{\sum^{n}_{i=1} u_{n+i} \alpha_i}.
		\end{equation*}
		
		We summarise the discussion of this section in the following algorithm.
		
		\begin{algorithm} \label{heegner point algorithm}
			\begin{itemize}
				\item[]
				\item INPUT: An elliptic curve $E$, a Heegner discriminant $D$, and a prime $p$ that divides $|\mathrm{Cl}(\mc{O}_K)|$ exactly once.
				\item OUTPUT: Coordinates $(x,y)$ of a point $P \in E(L)$ that is (conjecturally) the point $z_K=\mathrm{Tr}_{H/L}x_K$.  
			\end{itemize}  
			\begin{enumerate}
				\item Find a set of representatives $\mathfrak{a}_1,\mathfrak{a}_2,...,\mathfrak{a}_{n}$ for the class group $\mathrm{Cl}(K)$, and for each point $[\mathbb{C}/\mathfrak{a}^{-1}_i \xrightarrow{} \mathbb{C}/\mathfrak{a}^{-1}_i \mathcal{N}^{-1}]$, compute a corresponding $\tau_i$ in the upper half plane.
				\item Compute an equation for the Hilbert class field $L$, and compute an integral basis of the maximal order $\mathcal{O}_{L}$.
				\item Pick an $\epsilon>0$, and compute $\phi(\tau_i) \in \mathbb{C}$ to precision $\epsilon/2$, using the formula (\ref{modular parametrization power series eq}), by computing enough of the Fourier coefficients $a_n$.
				\item Compute the period lattice $\Lambda$ and hence the uniformisation map $\psi :\mathbb{C}/\Lambda \xrightarrow[]{} E$ to the required precision, and hence find $\psi(\phi(\tau_i))$. Then, use the description of the Galois action on Heegner points given in Section \ref{cm background} to take the trace from $H$ to $L$, and hence obtain $z_1,...,z_{2p}$ with $|\sigma_i(x)-z_i|<\epsilon$.
				\item Using $z_1,z_2,...,z_{2p}$ and choosing a large constant $C$, form the matrix $A_{z,\epsilon,C}$ as in Definition \ref{recognition matrix def}. Use the LLL algorithm to find a $U\in \mathrm{SL}_{4p}(\mathbb{Z})$ such that the rows of $UA_{z,\epsilon,C}$ form an LLL-reduced basis of $L_{z,\epsilon,C}$. Then let  $x=-\frac{\sum^{2p}_{i=1} u_{1,i} \alpha_i}{\sum^{2p}_{i=1} u_{1,2p+i} \alpha_i}$ and test if $x$ is the $x$-coordinate of a point in $E(L)$. If it is, solve for the $y$-coordinate and return $(x,y)$. Otherwise, replace $\epsilon$ by $\epsilon/2$, and return to Step 3.
			\end{enumerate}
		\end{algorithm}
		
		Steps (i), (ii) and (iii) of the algorithm have been studied extensively in the literature, see for example Section 8.6 of \cite{cohen2008number} or \cite{watkins2005some}, so we do not provide details on how to implement them. We have used the existing MAGMA implementations of these steps in our calculations.
		The algorithm has not proven that the point $(x,y)$ is indeed the point $x_K$, although we believe it is highly probable that it is, nor have we proven that the algorithm always terminates. However, in practice we have been able to use it to compute points on various for $p\leq 11$. 
		
		The main bottleneck is Step (iii). If the height of the Heegner point is very large, then we need to take $\epsilon$ to be very small for the algorithm to return a point in $E(L)$, and this requires computing a very large number of the Fourier coefficients $a_n$.
		
		\begin{remark} \label{remark verified heegner} The output of our algorithm, if it terminates, will be a point $u=(x',y') \in E(H)$, and as noted in \cite{JLS}, verifying that this point coincides with the Heegner point $z_K=(x,y)$ is a nontrivial matter. We know that the point we obtain is a good archimedean approximation of the Heegner point, in the sense that by increasing the precision in Algorithm \ref{heegner point algorithm} we can make the  absolute values $|\sigma(x)-\sigma(x')|$, where $\sigma$ is any embedding $L \hookrightarrow \mb{C}$, as small as we like. However, without a bound on the height of $z_K$, we can't actually prove that $u=z_K$.
		
		Since our main goal is to construct examples of non-trivial elements of the Tate-Shafarevich group of $E$, it suffices to verify that the point $u$ satisfies the same properties as the Heegner point $z_K$ for the purpose of constructing a Kolyvagin class, as formalized in Lemma \ref{cocycle point lemma}.  This also serves as a consistency check on our calculations, and in all of the examples we have computed, we believe it is very unlikely that the resulting point is not the Heegner point. Note that the appendix of \cite{JLS} provides a method one could use to compute a bound on the height of $x_K$ and hence make the calculations provably correct, but we did not implement this algorithm. 
		
		\end{remark}
        \begin{remark} \label{heegner method remark}
        The idea to use the LLL-algorithm method to recover $z$ from the matrix $A_{z,\epsilon,C}$ is very well known. Our approach differs from the standard method explained in Chapter 7 of \cite{hanrot2009lll}. Briefly, the standard method to recover an algebraic number $z$ from a set of complex numbers $\{z_g : g \in G\}$ that approximate the Galois conjugates of $z$ is to approximate the minimal polynomial $f$ of $z$ by $\prod_{g \in G}(x-z_g)$, and try to recognize the coefficients of $f$ as rationals, using continued fractions, or better yet the LLL algorithm. However, for us this method is not efficient enough, since $z$ is of very large height in the examples we consider, and the coefficients of the polynomial $f$ are symmetric polynomials in $z_g$, and hence are of even larger height. 
		
		Our method instead tries to recognize $z$ directly, taking advantage of the fact that we know that $x$ is defined over the Hilbert class field $H$. We  can compute this field beforehand, using the machinery of computational class field theory already implemented in MAGMA.
\end{remark}
		
		\begin{remark}
			A further improvement along the same lines   is to use the fact that we can also compute numerically the $y$-coordinate, and look for linear relations of the form $A+Bx+Cy=0$. Recall that we have assumed that $L$ is of class number 1, so that $x=r/t^2$ and $y=s/t^3$, for some $r,s,t \in \mc{O}_L$. Thus if $A+Bx+Cy=0$, we see that $t|A$, and that hence $A^2x \in \mc{O}_L$.  It is then simple to recover $A^2x$ from its floating point approximation, and hence compute the point $(x,y)$. Based on experimental data we have computed, this seems to be an improvement. A heuristic explanation might be that the minimal $A,B,C$ that can appear in a relation $A+Bx+Cy=0$ can be a lot smaller than the minimal $u,v$ appearing in a relation $u+vx=0$, and so it is easier to guess a short vector in the corresponding lattice.
		\end{remark}

		\section{Geometric realization of the Kolyvagin class} \label{geometric realization section} 
		
		\subsection{The  $p$-diagram associated to the Kolyvagin class} \label{ndiag koly section}
In this section we explain how to compute, given a Heegner point, equations for the $p$-diagram representing the Kolyvagin class. We first formalize the input we need from Heegner points. 

Throughout this section, we fix the following data. Let $E/\mathbb{Q}$ be an elliptic curve of rank 0, let $p$ be an odd prime and $K/\mathbb{Q}$ be a quadratic field. In addition, let $L/\mb{Q}$ be a dihedral extension, of degree $2p$, that contains $K$, such that $E(L)[p]$ is trivial.
		
\begin{proposition}
		Let $P \in E(L)$ be a point such that the class $[P]\in E(L)/pE(L)$ is invariant under the action of $G=\mathrm{Gal}(L/\mb{Q})$. Let  $\delta :E(L)/pE(L) \xrightarrow{} H^1(L,E[p])$ be the Kummer map, and let $\mathrm{res} : H^1(\mb{Q},E[p]) \xrightarrow{} H^1(L,E[p])$ be the restriction map. 

		Then there exists a unique class $c \in H^1(\mathbb{Q},E[p])$ such that $\mathrm{res}(c)=\delta([P])$.
	\end{proposition}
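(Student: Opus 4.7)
The plan is to apply the inflation-restriction exact sequence in complete analogy with the construction already used in Section~\ref{koly background} to descend $c_L$ from $L$ to $K$, but now with $\mathrm{Gal}(L/\mathbb{Q})$ in place of $\mathrm{Gal}(L/K)$. First I would write down the five-term exact sequence associated to the normal subgroup $\mathrm{Gal}(\bar{\mb{Q}}/L) \triangleleft \mathrm{Gal}(\bar{\mb{Q}}/\mb{Q})$ acting on $E[p]$:
\[
0 \to H^1(L/\mb{Q},E[p](L)) \to H^1(\mb{Q},E[p]) \xrightarrow{\mathrm{res}} H^1(L,E[p])^{\mathrm{Gal}(L/\mb{Q})} \to H^2(L/\mb{Q},E[p](L)).
\]
By the standing hypothesis that $E(L)[p]$ is trivial, the $G$-module $E[p](L)$ is zero, so both of the outer terms vanish. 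Hence restriction induces an isomorphism $\mathrm{res}: H^1(\mb{Q},E[p]) \xrightarrow{\sim} H^1(L,E[p])^{\mathrm{Gal}(L/\mb{Q})}$.

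Second, I would verify that $\delta([P])$ actually lies in the target group $H^1(L,E[p])^{\mathrm{Gal}(L/\mb{Q})}$. This uses the fact that the Kummer map $\delta$ is natural with respect to the Galois action: for any $\sigma \in G=\mathrm{Gal}(L/\mb{Q})$ one has $\sigma \cdot \delta([P])=\delta(\sigma \cdot [P])=\delta([P])$, the last equality by the hypothesis that $[P]\in E(L)/pE(L)$ is $G$-invariant. Applying the inverse of the restriction isomorphism to $\delta([P])$ then produces the required class $c\in H^1(\mb{Q},E[p])$, and uniqueness is immediate from the injectivity of $\mathrm{res}$.

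There is essentially no hard step; the proposition is a formal consequence of inflation-restriction together with the vanishing hypothesis on $p$-torsion. The one point that deserves careful bookkeeping is that the input hypothesis used is $E(L)[p]=0$ (not merely $E(K)[p]=0$), which is exactly what makes \emph{both} $H^1$ and $H^2$ of the quotient $G$ vanish simultaneously and hence turns restriction into an isomorphism rather than just an injection.
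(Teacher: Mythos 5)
Your proof is correct and is exactly the argument the paper intends: its proof of this proposition is the single line ``This is the inflation-restriction argument from Section 2,'' i.e.\ the five-term sequence for $\mathrm{Gal}(L/\mb{Q})$ with both outer terms killed by $E[p](L)=0$. Your additional check that $\delta([P])$ is Galois-invariant via naturality of the Kummer map is the same bookkeeping the paper performs when it defines $c_L$ earlier in Section 2.
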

\begin{proof}
	This is the inflation-restriction argument from Section \ref{koly background}.
\end{proof}
The aim of this section is to give method to compute equations for the $p$-diagram representing the class $c$. This is accomplished by Galois descent, and involves explicit cocycle calculations. Let $\sigma \in G$ be an element of order $p$, and let $\tau \in G$ be an involution. 
		\begin{lemma} \label{cocycle point lemma}
		Let $P \in E(L)$ be a point with $[P] \in (E(L)/pE(L))^{G}$, and suppose that we also have $\tau(P)=P$. We then have the following.
			
			\begin{enumerate}[label=(\roman*)]
				\item For each $g \in G$, there exists a unique $R_g \in E(L)$ with $pR_g=g(P)-P$. The map $g \mapsto R_g$ defines a cocycle in $H^1(G,E(L))$, meaning that for any $g,h \in G$ we have
				\begin{equation*} 
				R_{gh}=g(R_h)+R_g,
				\end{equation*}
				
				\item For $0 \leq k \leq p-1$, we have $R_{\sigma^k}=\sum^{k}_{i=1} \sigma^{i-1}(R_{\sigma})$, and $R_{\sigma^k\tau}=R_{\sigma^k}$.
				
				\item We have $\sum^{p}_{k=1} \sigma^k (R_{\sigma})=0_E$ and $\tau(R_{\sigma^k})=R_{\sigma^{p-k}}$.
				
				\item  We have $[P]=[D_{\sigma} R_{\sigma}]=[\sum^{p-1}_{i=1} i\sigma^i (R_{\sigma})]=[\sum^{p-1}_{i=1} R_{\sigma^i}] \in E(L)/pE(L)$.
			\end{enumerate}
		\end{lemma}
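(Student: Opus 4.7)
The proof plan treats parts (i)--(iv) in order, leveraging the cocycle structure from part (i) throughout.

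For (i), observe that $[P] \in (E(L)/pE(L))^G$ exactly means $g(P) - P \in pE(L)$ for every $g \in G$; since $E(L)[p] = 0$ by hypothesis, there is a unique $R_g \in E(L)$ with $pR_g = g(P) - P$. Applying this to $gh$ and decomposing $(gh)(P) - P = g(h(P) - P) + (g(P) - P)$ gives $pR_{gh} = p(g(R_h) + R_g)$, and cancelling $p$ by uniqueness yields the cocycle relation.

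For (ii), the formula $R_{\sigma^k} = \sum_{i=1}^k \sigma^{i-1}(R_\sigma)$ follows by induction on $k$ from the cocycle identity $R_{\sigma^k} = \sigma^{k-1}(R_\sigma) + R_{\sigma^{k-1}}$, starting from $R_1 = 0$. The assertion $R_{\sigma^k\tau} = R_{\sigma^k}$ reduces via the cocycle to $R_\tau = 0$, which in turn follows from the additional hypothesis $\tau(P) = P$ (giving $pR_\tau = 0$) together with the absence of $p$-torsion.

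For (iii), taking $k = p$ in (ii) gives $R_1 = \sum_{i=0}^{p-1}\sigma^i(R_\sigma) = \mathrm{Tr}(R_\sigma)$, and since $R_1 = 0$ this yields $\mathrm{Tr}(R_\sigma) = 0$, which is the first identity (after reindexing using $\sigma^p = 1$). For the second, the dihedral relation $\tau \sigma^k = \sigma^{-k}\tau$ combined with the cocycle identity on both sides of $R_{\tau \sigma^k} = R_{\sigma^{-k}\tau}$ gives $\tau(R_{\sigma^k}) + R_\tau = R_{\sigma^{-k}} + \sigma^{-k}(R_\tau)$; since $R_\tau = 0$, this collapses to $\tau(R_{\sigma^k}) = R_{\sigma^{-k}} = R_{\sigma^{p-k}}$.

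For (iv), the first equality follows from the algebraic identity $(\sigma - 1) D_\sigma = p - \mathrm{Tr}$ in $\Z[G]$ already recorded in the paper. Applied to $R_\sigma$ and combined with $\mathrm{Tr}(R_\sigma) = 0$ from (iii), this gives $(\sigma - 1)(D_\sigma R_\sigma) = pR_\sigma = \sigma(P) - P$, so $D_\sigma R_\sigma - P$ is $\sigma$-fixed and therefore lies in $E(L)^{\langle \sigma\rangle} = E(K)$. Under the paper's standing assumptions (in particular that $E/K$ has rank zero and $E(K)[p] = 0$), $E(K)$ is finite of order prime to $p$, so $E(K) \subseteq pE(L)$ and hence $[P] = [D_\sigma R_\sigma]$ in $E(L)/pE(L)$. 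The middle equality is simply the definition of $D_\sigma$. The final equality is purely combinatorial: expanding $\sum_{i=1}^{p-1} R_{\sigma^i}$ via (ii) and swapping the order of summation yields $\sum_{j=0}^{p-2}(p-1-j)\sigma^j(R_\sigma)$, which one then compares with $D_\sigma R_\sigma = \sum_{j=1}^{p-1} j\sigma^j(R_\sigma)$ modulo $p$, using $\mathrm{Tr}(R_\sigma) = 0$ to simplify. The main obstacle in (iv) is precisely this last step: the signs and index shifts must be handled carefully to distinguish an exact identity in $E(L)$ from a congruence modulo $pE(L)$.
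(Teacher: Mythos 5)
Your treatment of (i)--(iii) is correct and follows the paper's proof: existence and uniqueness of $R_g$ come from $G$-invariance of $[P]$ together with the triviality of $E(L)[p]$, the cocycle relation follows by cancelling $p$, (ii) is the induction you describe together with $R_\tau=0_E$ (forced by $\tau(P)=P$ and the absence of $p$-torsion), and (iii) follows from the dihedral relation $\tau\sigma^k=\sigma^{-k}\tau$. The paper leaves (ii) and (iii) as one-line consequences of (i); you have supplied exactly the intended details.

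In (iv) there are two issues. First, from $(\sigma-1)(D_\sigma R_\sigma)=pR_\sigma=\sigma(P)-P$ you conclude only that $Q=P-D_\sigma R_\sigma$ is $\sigma$-fixed, hence in $E(K)$, and you then invoke finiteness of $E(K)$. But the hypotheses in force where this lemma is stated are only that $E/\Q$ has rank $0$ and $E(L)[p]=0$; rank zero over $K$ is part of the Heegner setup elsewhere in the paper, not of this lemma. The paper instead also verifies $\tau(D_\sigma R_\sigma)=D_\sigma R_\sigma$ using (iii) and $\tau(P)=P$, so that $Q\in E(\Q)$, which is finite with no $p$-torsion and therefore lies in $pE(L)$; you should add that step rather than import the stronger assumption. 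Second, and more substantively, you leave the final equality $[\sum_{i=1}^{p-1}R_{\sigma^i}]=[D_\sigma R_\sigma]$ as an acknowledged ``obstacle'' rather than proving it, and the computation you set up does not in fact close: the coefficient of $\sigma^j(R_\sigma)$ in $\sum_{i=1}^{p-1}R_{\sigma^i}$ is $p-1-j$ for $0\le j\le p-2$, and subtracting $(p-1)\cdot\mathrm{Tr}(R_\sigma)=0_E$ gives $\sum_{i=1}^{p-1}R_{\sigma^i}=-D_\sigma R_\sigma$ exactly, hence the class $-[P]$ rather than $[P]$. So the last link in the chain holds only up to sign. This defect is inherited from the paper, whose own proof asserts the reindexing $\sum_{i=1}^{p-1}\sum_{j=1}^{i}\sigma^{j-1}(R_\sigma)=\sum_{i=1}^{p-1}i\sigma^i(R_\sigma)$, which is not correct; the discrepancy is harmless downstream because the construction only needs the $\mathbb{F}_p$-line spanned by the class, but your instinct that the signs and index shifts are delicate here is right, and the honest conclusion of the calculation is $[\sum_{i=1}^{p-1}R_{\sigma^i}]=-[D_\sigma R_\sigma]=-[P]$.
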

		
		\begin{proof}
Since $[g(P)]=[P] \in E(L)/pE(L)$ for every $g \in G$, there exists $R_{g}\in E(L)$ with $pR_g=g(P)-P$. Since $E(L)[p]$ is trivial, $R_g$ is unique, and the cocycle condition follows from
			\[
			pR_{gh}=gh(P)-P=g(h(P)-P)+g(P)-P=pg(R_h)+pR_{g}=p(g(R_h)+R_g),
			\]
			proving (i). Parts (ii) and (iii) then follow from (i) and the identity $\sigma^{p-k}\tau=\tau \sigma^k$.	For (iv), using (ii) we see that 
			\[\sum^{p-1}_{i=1} R_{\sigma^i}=\sum^{p-1}_{i=1}\sum^{i}_{j=1} \sigma^{j-1}(R_{\sigma})=\sum^{p-1}_{i=1} i\sigma^i (R_{\sigma})= D_{\sigma}R_{\sigma}.\]
		By the identity $(\sigma-1) \cdot D_{\sigma}=p-\sum^{p}_{i=1} \sigma^i$ and (iii), we have  $\sigma(D_{\sigma}R_{\sigma})-D_{\sigma}R_{\sigma}=pR_{\sigma}-\sum^{p}_{i=1} \sigma^i( R_{\sigma})=pR_{\sigma}$. Using (iii), $\tau(D_{\sigma}R_{\sigma})=\tau(\sum_{i=1}^{p} R_{\sigma^i})=\sum_{i=1}^{p} R_{\sigma^{p-i}}=D_{\sigma}R_{\sigma}$. Let $Q=P-D_{\sigma}R_{\sigma}$. We have $\sigma(Q)=\tau(Q)=Q$, and hence $Q \in E(\mathbb{Q})$. But we have assumed that $E(\mathbb{Q})$ is finite, so $Q$ is a torsion point. As $E(L)[p]$ is trivial, the image of $Q$ in $E(L)/pE(L)$ is zero, and hence $[P]=[D_{\sigma}R_{\sigma}]$, as desired. 
		\end{proof}		
 We now describe the $p$-diagram corresponding to $c_L$ and the action of the Galois group on this diagram. For a point $Q \in E(L)$, let $\phi_{Q} : E \xrightarrow[]{} E$ be the translation by $Q$ morphism.
		\begin{proposition} \label{galois action divisor prop}
			\begin{enumerate}[label=(\roman*)]
				\item Consider the degree $p$ divisor $D$ on $E$, defined by $D=\sum^{p}_{i=1} R_{\sigma^i}$. Let $l_1,...,l_p$ be a basis of the Riemann-Roch space $\mathcal{L}(D)$ and let $E \xrightarrow{l} \mathbb{P}^{p-1}$ be the embedding induced by this choice of basis. Then $[E \xrightarrow{l} \mathbb{P}^{p-1}]$ is the $p$-diagram representing $c_L \in H^1(L,E[p])$.
				\item The action of the Galois  group $G$ on the divisor $D$ is given by
				\[
				g\left(\sum_{i=0}^{p-1} R_{\sigma^i}\right)=\phi_{R_g}^{*}\left(\sum_{i=0}^{p-1} R_{\sigma^i}\right).
				\]
				
				\item For each $g\in G$, the translation map $\phi_{R_g}$ induces an isomorphism  of $p$-diagrams $g \cdot [E \xrightarrow{ l} \mathbb{P}^{p-1}]$ and $[E \xrightarrow{l} \mathbb{P}^{p-1}]$, represented by the commutative diagram 
				\begin{equation} \label{cocycle diagram} \begin{tikzcd}
				E \arrow{r}{g \cdot l} \arrow[swap]{d}{\phi_{R_{g}}} & \mathbb{P}^{p-1} \arrow{d}{M_{g}} \\%
				E \arrow{r}{l}& \mathbb{P}^{p-1}
				\end{tikzcd}
				\end{equation}
				where $M_g \in \mathrm{PGL}_p(L)$. The map $g \mapsto M_g$ determines a cocycle class in $H^1(G,\mathrm{PGL}_p(L))$.
			\end{enumerate}
		\end{proposition}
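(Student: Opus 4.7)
The three parts are tightly linked; I plan to prove (i) directly from the explicit description of the Kummer map, then (ii) by a divisor-level identity, and finally derive (iii) from (ii) through a change-of-basis argument.

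For (i), the description of the Kummer map recalled in Section \ref{koly background} sends $[P]$ to the $p$-diagram $[E \to \PP^{p-1}]$ induced by any effective degree-$p$ divisor with sum $P$. Since $\mathrm{sum}(\phi_Q^* D) = \mathrm{sum}(D) - pQ$ when $\deg D = p$, and the embeddings $l_D$ and $l_D \circ \phi_Q$ differ only by the translation $\phi_Q$, which is an automorphism of the torsor $E$, the isomorphism class of the resulting $p$-diagram depends only on $[\mathrm{sum}(D)] \in E(L)/pE(L)$. So I only need $\mathrm{sum}(D) \equiv [P] \pmod{pE(L)}$. Since $pR_1 = 0$ and $E(L)[p] = 0$, we have $R_{\sigma^p} = R_1 = 0$, so $D$ reduces to $\sum_{i=1}^{p-1} R_{\sigma^i}$, and this congruence is exactly the content of Lemma \ref{cocycle point lemma}(iv).

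For (ii), I compute both sides as divisors on $E$. With the convention $\phi_{R_g}^*(Q) = (Q - R_g)$ for the pullback of a point divisor under translation, the right-hand side is $\sum_{i=0}^{p-1}(R_{\sigma^i} - R_g)$. The left-hand side is $\sum_{i=0}^{p-1} (g R_{\sigma^i})$, and the cocycle identity in Lemma \ref{cocycle point lemma}(i) gives $g(R_{\sigma^i}) = R_{g\sigma^i} - R_g$. So the claim reduces to the multiset equality $\{R_{g\sigma^i}\}_{i=0}^{p-1} = \{R_{\sigma^i}\}_{i=0}^{p-1}$. I split into the two types of elements of the dihedral group $G$: for $g = \sigma^j$ this is immediate by reindexing, while for $g = \sigma^j\tau$ the relation $\tau\sigma^i\tau = \sigma^{-i}$ gives $g\sigma^i = \sigma^{j-i}\tau$, and then Lemma \ref{cocycle point lemma}(ii) identifies $R_{\sigma^k\tau} = R_{\sigma^k}$, again reducing to a reindexing.

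For (iii), the automorphism $\phi_{R_g}$ pulls $\mathcal{L}(D)$ back isomorphically onto $\mathcal{L}(\phi_{R_g}^* D)$, which by (ii) equals $\mathcal{L}(g(D))$. This Riemann-Roch space is $p$-dimensional, so the two bases $(l_i \circ \phi_{R_g})_i$ and $(g(l_i))_i$ differ by a unique invertible matrix, well-defined up to scalar, and hence determine an element $M_g \in \PGL_p(L)$ making the diagram commute. To obtain the cocycle relation $M_{gh} = M_g \cdot g(M_h)$, I plan to apply both compositions to the basis $gh \cdot l$: the identity $R_{gh} = R_g + g(R_h)$ from Lemma \ref{cocycle point lemma}(i) yields $\phi_{R_{gh}} = \phi_{R_g}\circ\phi_{g(R_h)}$, and the naturality relation $(g\cdot l)\circ\phi_{g(R_h)} = g(l\circ\phi_{R_h})$ then makes the comparison formal. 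The main technical obstacle is bookkeeping, namely the sign conventions for pullback under translation and the interaction of the Galois action with composition; once (ii) is established, everything else in the proposition reduces to the compatibility between the Galois action on the divisor $D$ and translation by $R_g$.
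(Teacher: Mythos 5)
Your proposal is correct and follows essentially the same route as the paper: part (i) via the explicit Kummer map together with Lemma \ref{cocycle point lemma}(iv), part (ii) by reducing $g(D)=\phi_{R_g}^{*}(D)$ to the identity $g(R_{\sigma^i})=R_{g\sigma^i}-R_g$ and the dihedral relations (you handle general $g$ by a case split where the paper checks the generators $\sigma,\tau$ and invokes the cocycle condition), and part (iii) by comparing the two bases $(l_i\circ\phi_{R_g})$ and $(g(l_i))$ of $\mathcal{L}(g(D))$. The only substantive variations are that you verify the cocycle relation for $g\mapsto M_g$ by a direct computation from $\phi_{R_{gh}}=\phi_{R_g}\circ\phi_{g(R_h)}$ rather than the paper's argument that $M_g$ restricted to the spanning curve $C_L$ equals $\phi_{R_g}$, and your $M_{gh}=M_g\cdot g(M_h)$ versus the paper's $M_{gh}=g(M_h)M_g$ is just the transposed convention for how the matrix acts on the basis, not an error.
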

		
		\begin{proof}
		 By Lemma \ref{cocycle point lemma}(iv), we have $[\mathrm{sum}(D)]=[D_{\sigma}  R_{\sigma}]=[P] \in E(L)/pE(L)$. Then part (i) follows from the description of the Kummer map given in Section \ref{koly background}. For part (ii), by Lemma \ref{cocycle point lemma}(ii) and (iii), we have $\sigma(R_{\sigma}^{i})=R_{\sigma^{i}}-R_{\sigma}$ and $\tau(R_{\sigma^{i}})=R_{\sigma^{p-i}}$. From these identities we obtain
			\[
			\sigma\left(\sum_{i=0}^{p-1} R_{\sigma^i}\right)=\phi_{R_{\sigma}}^{*}\left(\sum_{i=0}^{p-1} R_{\sigma^i} \right),
		 \hspace{10pt}
			\tau\left(\sum_{i=0}^{p-1} R_{\sigma^i}\right)=\sum_{i=0}^{p-1}  R_{\sigma^i}.
			\]
			The cocycle condition for $g \mapsto R_g$ implies the result for all $g \in G$. 
	
			To see the isomorphism of $p$-diagrams in (iii), note that by (ii) we have $\phi^{*}_{-R_g}(D)=g(D)$, and that $g(l_1),...,g(l_p)$ and $\phi^{*}_{-R_g}(l_1),...\phi^{*}_{-R_g}(l_p)$ are two bases of $\mathcal{L}(g(D))$. We can then take $M_g$ to be the matrix taking one basis to the other. Finally, to see that $g \mapsto M_g$ is a cocycle, let $C_L$ be the image of $E$ in $\mathbb{P}^{p-1}$. $C_L$ is a genus one normal curve of degree $p$, so in particular it spans $\mathbb{P}^{p-1}$, and $M_g$ restricted to $C_L$ is equal to $\phi_{R_g}$. As $g \mapsto \phi_{R_g}$ is a cocycle, we deduce that $M_{gh}=g(M_h)M_g$ holds on $C_L$, and as $C_L$ spans $\mathbb{P}^{p-1}$ and $M_g$ is an automorphism of $\mb{P}^{p-1}$, $M_{gh}=g(M_h)M_g$ must hold on the entire $\mathbb{P}^{p-1}$.
		\end{proof}
	
	The Galois group $G$ acts in a natural way on the field $\mc{L}(E)$ of rational functions on $E$. Explicitly, for $g \in G$ and $f=u/v \in \mc{L}(E)$, with $u,v \in L[x,y]$, we have $g(f)=g(u)/g(v)$, where $g$ acts on $u$ and $v$ by acting on their coefficients. Using this action, we define a twisted action of $G$ on $\mc{L}(E)$ by setting $g \star f=\phi^{*}_{-R_g}(g(f))$. That this is a group action follows immediately from the cocycle condition for $g \mapsto R_g$. By Proposition \ref{galois action divisor prop}(ii), the action restricts to an action on the space $\mc{L}(D)$.
	
	The action $\star$ is semilinear, meaning that we have $g(v+w)=g(v)+g(w)$ and $g(\alpha v)=g(\alpha) g(v)$ for all $v,w \in V$, $\alpha \in L$ and $g \in \mathrm{Gal}(L/\mathbb{Q})$. We need the following standard result, which is equivalent to (generalized) Hilbert's theorem 90.
	
	\begin{lemma} \label{semilinear hilbert 90}
	        Let $V$ be an $n$-dimensional $L$-vector space with a semilinear action of $\mathrm{Gal}(L/\Q)$.
	    	The set of invariant elements $V(D)^{G}$ is an $n$-dimensional $\mathbb{Q}$-vector subspace of $V$. We have $V \cong V^{G} \otimes_{\mathbb{Q}} L$, i.e. $V$ has a basis of $G$-invariant vectors. 
	\end{lemma}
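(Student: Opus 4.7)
The plan is to reduce the assertion to the classical generalized Hilbert's theorem 90, namely $H^1(\mathrm{Gal}(L/\Q), \GL_n(L)) = 1$ (Speiser's lemma). The argument proceeds in three steps: verify that $V^G$ is a $\Q$-subspace, show that the canonical multiplication map $\mu : V^G \otimes_{\Q} L \to V$, $v \otimes \lambda \mapsto \lambda v$, is injective, and finally exhibit an $L$-basis of $V$ consisting of $G$-invariant vectors, which forces $\dim_{\Q} V^G \geq n$.

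The first step is immediate from semilinearity and the fact that $g \in G$ fixes $\Q$ pointwise: for $v, w \in V^G$ and $\alpha \in \Q$ one has $g(\alpha v + w) = g(\alpha) g(v) + g(w) = \alpha v + w$.

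For injectivity of $\mu$, I would argue, contrapositively, that any $\Q$-linearly independent tuple $v_1, \ldots, v_m \in V^G$ stays $L$-linearly independent in $V$. Suppose not, and choose a nontrivial relation $\sum_{i=1}^{m} \lambda_i v_i = 0$ of minimal length, rescaled so that $\lambda_1 = 1$. For any $g \in G$, applying $g - \mathrm{id}$ to both sides and using $g(v_i) = v_i$ together with semilinearity yields $\sum_{i \geq 2}(g(\lambda_i) - \lambda_i) v_i = 0$, a strictly shorter relation. Minimality forces $g(\lambda_i) = \lambda_i$ for every $g \in G$ and every $i$, whence $\lambda_i \in \Q$, contradicting the $\Q$-linear independence of the $v_i$.

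For the third step, fix any $L$-basis $e_1, \ldots, e_n$ of $V$ and define $c(g) = (c_{ij}(g)) \in \GL_n(L)$ by $g(e_j) = \sum_i c_{ij}(g)\, e_i$. Unpacking $g(h(e_j)) = (gh)(e_j)$ via semilinearity gives the cocycle identity $c(gh) = c(g)\, g(c(h))$, so $c \in Z^1(G, \GL_n(L))$. By Speiser's lemma there exists $B \in \GL_n(L)$ with $c(g) = B^{-1} g(B)$ for every $g \in G$. Setting $f_j := \sum_i (B^{-1})_{ij}\, e_i$ produces another $L$-basis of $V$, and a direct computation using semilinearity confirms that $g(f_j) = f_j$, i.e.\ $f_1, \ldots, f_n \in V^G$. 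Combined with the injectivity established in the second step this forces $\dim_{\Q} V^G = n$ and gives the desired isomorphism $V^G \otimes_{\Q} L \cong V$. The only substantive input is Hilbert 90 for $\GL_n$, so the main obstacle is really only the bookkeeping needed to recognize the semilinear action as a $1$-cocycle in a form where Speiser's lemma directly applies.
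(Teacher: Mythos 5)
Your proof is correct, and all three steps check out: the minimal-length relation argument does show that $\Q$-independent invariant vectors remain $L$-independent, the identity $c(gh)=c(g)\,g(c(h))$ is the right cocycle condition for the convention $g(e_j)=\sum_i c_{ij}(g)e_i$, and the verification that $f_j=\sum_i (B^{-1})_{ij}e_i$ is $G$-invariant goes through. The paper itself gives no argument here: it simply cites Lemma 5.8.1 of Chapter II of Silverman's \emph{Arithmetic of Elliptic Curves} (after remarking that the statement is ``equivalent to (generalized) Hilbert's theorem 90''), so your write-up supplies exactly the content that is outsourced. The one genuine difference worth noting is that the cited lemma in Silverman is usually proved by an averaging/trace argument --- one shows directly that the trace map $V\to V^G$, $v\mapsto\sum_{g\in G}g(v)$, hits enough invariant vectors to span $V$ over $L$ --- rather than by invoking $H^1(G,\GL_n(L))=1$. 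That route has the small advantage of immediately yielding the surjectivity of the trace map, which the paper uses in the remark following the lemma as its practical recipe for computing a basis of $V^G$; with your cocycle proof that surjectivity is still an easy consequence of $V\cong V^G\otimes_\Q L$ together with the surjectivity of $\mathrm{Tr}_{L/\Q}$, but it is not handed to you for free. Since $L/\Q$ is a finite Galois extension both proofs apply without any continuity caveats, so the choice is purely a matter of taste.
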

\begin{proof}
     Follows immediately from Lemma 5.8.1 in Chapter II of \cite{silverman2009arithmetic}.
\end{proof}

\begin{remark} \label{linear algebra trace remark}
For $V$ as in the above lemma, the trace map $V \xrightarrow[]{} V^{G}$ is surjective. In other words, if  $\alpha_1,\alpha_2,...,\alpha_{2p}$ is a basis of $L$ over $\mathbb{Q}$, then $V^{G}$ is spanned by the elements $\sum_{g \in G} g(\alpha_i v)$ for $1 \leq i \leq 2p$, $v \in V$. This provides a simple method to compute a basis of $V^G$.
\end{remark}

	Let $l_1,\ldots,l_p$ be a basis of $\mc{L}(D)$, and for each $g\in G$, define $N_g \in  \mathrm{GL}_p(L)$ be the matrix representing the action of $g$ on $\mc{L}(D)$ with respect to this basis.  Then the matrix $N^{-1}_g$ represents the automorphism $M_g \in \mathrm{PGL}_p(L)$ defined in Proposition \ref{galois action divisor prop}(iii), and slightly abusing notation, we write $M_g=N^{-1}_g$.
	\begin{remark}
	The semilinearity of the action $\star$ immediately implies that $g \mapsto M_g$ is a cocycle taking values in $\mathrm{GL}_p(L)$, i.e. we lifted the cocycle $g \mapsto M_g$ to an element of $Z^1(G,\mathrm{GL}_{p}(L)$. This can also be interpreted as showing that the obstruction (\cite{cremona2009explicit}, \cite{o2002period}) of the class $c \in H^1(\Q,E[p])$ in the Brauer group vanishes.
    \end{remark}
\begin{proposition} \label{Galois descent prop}
    	Let $f_{1},f_2,...,f_{p}$ be a basis of $\mathcal{L}(D)$ invariant under the action $\star$. Then the image $C_{\mathbb{Q}}$ of $E$ under the embedding $X \xrightarrow{} [f_1(X):f_2(X):...:f_p(X)]$ can be defined over $\mathbb{Q}$, i.e. the ideal defining $C_{\mathbb{Q}}$ as a projective curve has a basis consisting of polynomials with rational coefficients. The $p$-diagram $C_{\mathbb{Q}} \xrightarrow{} \mathbb{P}^{p-1}$ represents the Kolyvagin class $c$.
\end{proposition}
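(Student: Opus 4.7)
The plan is to realize $C_{\mathbb{Q}}$ as the Galois descent of $C_L$ obtained by splitting the cocycle $g \mapsto M_g$ inside $\mathrm{GL}_p(L)$. Write $T \in \mathrm{GL}_p(L)$ for the change-of-basis matrix $f_i = \sum_j T_{ij}\, l_j$. The hypothesis $g\star f_i = f_i$, i.e.\ $\phi^{*}_{-R_g}(g(f_i)) = f_i$, together with the defining identity $\phi^{*}_{R_g}(l_i) = \sum_j (M_g)_{ij}\, g(l_j)$ coming from the commutative diagram in Proposition \ref{galois action divisor prop}(iii), unwinds (using $\phi^{*}_{-R_g}\circ\phi^{*}_{R_g} = \mathrm{id}$) to the matrix relation
\[
M_g = T^{-1}\, g(T).
\]
Thus $T$ is an explicit $\mathrm{GL}_p(L)$-cochain trivializing the cocycle $\{M_g\}$, exactly as predicted by Hilbert 90 in the form of Lemma \ref{semilinear hilbert 90}.

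Next I would verify that $C_{\mathbb{Q}} = T(C_L) \subset \mathbb{P}^{p-1}_L$ is $\mathrm{Gal}(L/\mathbb{Q})$-stable. The commutativity of diagram \eqref{cocycle diagram}, together with the fact that $\phi_{R_g}$ is an automorphism of $E$ preserving $C_L$, gives $g(C_L) = M_g^{-1}(C_L)$. Combining this with $g(T) = T M_g$ yields
\[
g(C_{\mathbb{Q}}) = g(T)\bigl(g(C_L)\bigr) = T M_g M_g^{-1}(C_L) = T(C_L) = C_{\mathbb{Q}}.
\]
A closed subscheme of $\mathbb{P}^{p-1}_L$ whose homogeneous ideal is stable under $G = \mathrm{Gal}(L/\mathbb{Q})$ is generated (after applying the trace, using Remark \ref{linear algebra trace remark}) by polynomials with $\mathbb{Q}$-coefficients, so $C_{\mathbb{Q}}$ descends to a subscheme of $\mathbb{P}^{p-1}_{\mathbb{Q}}$, establishing the first claim.

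For the identification with the Kolyvagin class, I would invoke the restriction isomorphism $\mathrm{res} : H^1(\mathbb{Q}, E[p]) \xrightarrow{\sim} H^1(L, E[p])^{G}$ recorded in Section \ref{koly background}. Since $T\circ l = f$ as morphisms $E \to \mathbb{P}^{p-1}$ over $L$, the pair $(\mathrm{id}_E, T)$ is an $L$-isomorphism of $p$-diagrams $[E \xrightarrow{l} \mathbb{P}^{p-1}] \cong [E \xrightarrow{f} \mathbb{P}^{p-1}]$, and the latter is in turn isomorphic to the base-change $[C_{\mathbb{Q},L} \subset \mathbb{P}^{p-1}]$ via $(f, \mathrm{id})$. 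By Proposition \ref{galois action divisor prop}(i), the first diagram represents $c_L$; hence $[C_{\mathbb{Q}} \subset \mathbb{P}^{p-1}]_{\mathbb{Q}}$ restricts to $c_L \in H^1(L, E[p])^G$. Since $c$ also restricts to $c_L$ and $\mathrm{res}$ is injective, the class of $[C_{\mathbb{Q}} \subset \mathbb{P}^{p-1}]$ in $H^1(\mathbb{Q}, E[p])$ equals $c$.

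The main obstacle is the very first step: the clean identity $g(T) = T M_g$ only emerges if the conventions for $\phi^{*}_{\pm R_g}$, for the twisted action $\star$, and for the row/column orientation of $M_g$ in Proposition \ref{galois action divisor prop}(iii) are all tracked consistently. Once that bookkeeping is settled, the remainder is a formal Galois descent, with the invariance of $C_{\mathbb{Q}}$ and the identification with $c$ following directly from the cocycle-splitting relation.
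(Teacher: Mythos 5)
Your proposal is correct and follows essentially the same route as the paper: the paper phrases your cocycle-splitting relation $M_g = T^{-1}g(T)$ simply as ``the matrix $N_g$ representing the $\star$-action in the invariant basis is the identity,'' deduces $g(C_{\mathbb{Q}})=C_{\mathbb{Q}}$ from Proposition \ref{galois action divisor prop}(iii), descends the defining ideal via Lemma \ref{semilinear hilbert 90} applied to the space of quadrics (or the cubic when $p=3$), and identifies the class with $c$ through the restriction isomorphism exactly as you do. The only cosmetic difference is that you make the trivializing cochain $T$ explicit, which the paper leaves implicit.
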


\begin{proof}
Since the $f_{i}$ are invariant under the action $\star$, for each $g$ the matrix $N_g$ that represents this action is the identity matrix. For each $g \in G$, let $g(C_\Q)$ be the image of $C_{\Q}$ under the standard action of $G$ on $\PP^{p-1}$, i.e. $g \cdot (u_1 : \ldots : u_p)=(g(u_1) : \ldots: g(u_p))$. By Proposition \ref{galois action divisor prop}(iii), we have $g(C_{\mathbb{Q}})=C_{\mathbb{Q}}$ for all $g \in G$.

Let $I$ be the ideal defining $C_{\mathbb{Q}}$. It is generated by a set of $p(p-3)/2$ quadratic forms if $p\geq 5$, and if $p=3$, it is generated by a ternary cubic form. For $p \geq 5$, the $L$-vector space of quadrics vanishing on $C_{\mathbb{Q}}$ is stable under the natural semilinear action of $G$, and hence, by Lemma \ref{semilinear hilbert 90}, has a basis consisting of $G$-invariant elements, i.e. quadrics with rational coefficients. Similarly, if $p=3$, there exists a rational ternary cubic defining $C_{\mathbb{Q}}$. In any case, $[C_{\Q} \subset \mb{P}^{p-1}]$ is a $p$-diagram defined over $\Q$, which represents a class in $H^1(\Q,E[p])$ that restricts to the class $c_L \in H^1(L,E[p])$, and hence is a $p$-diagram that represents the Kolyvagin class.
\end{proof}

The above proposition thus reduces our problem to computing a basis of $\mc{L}(D)^{G}$. Once we compute the cocycle $M_g$ representing the action $\star$ relative to a basis of $\mc{L}(D)$, this is just linear algebra, see Remark \ref{linear algebra trace remark}. 
	\subsection{Computing the matrices $M_{g}$}\label{formula subsection}

	 We start by fixing a basis of $\mc{L}(D)$. We assume that the points $R_{\sigma^i}$ are pairwise distinct - it is easy to see that this assumption holds if the class $[P]\in E(L)/pE(L)$ is non-trivial.  To make the formulas simpler, we will assume $E$ is in short Weierstrass form, defined by $y^2=x^3+Ax+B$. Put $R_{g}=(x_{g},y_{g})$ for each non-trivial $g \in G$. Define $l_k=\frac{y+y_{\sigma^k}}{x-x_{\sigma^k}}$ for $1 \leq k \leq p-1$, and set $l_p=1$. 
		
		For $k<p$, it is clear that $l_k$ has a simple pole at $0_E$. We note that it has a simple pole at $R_{\sigma}$, and no other poles. Indeed, $x-x_{\sigma^k}$ is of degree two and vanishes at $R_{\sigma^k}$ and $-R_{\sigma^k}$, and $y+y_{R\sigma}$ vanishes at $-R_{\sigma^k}$. Now it follows easily that $l_k \in \mathcal{L}(D)$, and furthermore that $l_1,l_2,...,l_{p-1},l_p$ are linearly independent, and so they span the $p$-dimensional space $\mathcal{L}(D)$. 
		
		Note that it suffices to compute $M_{\sigma}$ and $M_{\tau}$, as $\sigma$ and $\tau$ generate $G$.
		
		\begin{proposition} \label{formula for M prop}
			The matrices $M_{\sigma}$ and $M_{\tau}$ in $\mathrm{GL}_p(L)$, relative to the basis $l_1,\ldots,l_p$, are given by 
			\[
			M_{\sigma}=\begin{pmatrix}
			0 & 0 & \hdots & 0 & -1 & 0 \\
			1 & 0 & \hdots & 0 & -1 & c_2 \\
			0 & 1  & \hdots & 0 & -1 & c_3 \\
			\vdots & \vdots & \ddots & \vdots & \vdots & \vdots \\
			0 & 0 & \hdots & 1 & -1 & c_{p-1} \\
			0 &  0 & \hdots & 0 & 0 & 1 
			\end{pmatrix}, 	\hspace{10pt}	M_{\tau}=\begin{pmatrix}
			0 & 0 & 0 & \hdots & 0 & 1 & 0 \\
			0 & 0 & 0 & \hdots & 1 & 0 & 0 \\
			\vdots & \vdots & \vdots & \ddots & \vdots & \vdots & \vdots \\
			0 & 1 & 0 & \hdots & 0 & 0 & 0 \\
			1 & 0  & 0 & \hdots & 0 & 0 & 0  \\
			0  & 0 & 0 & \hdots & 0 & 0 & 1 \\
			\end{pmatrix}
			\]
			where $c_k=\frac{y_{\sigma}+y_{\sigma^{k}}}{x_{\sigma}-x_{\sigma^{k}}}$ for $2 \leq k \leq p-1$. 
		
		\end{proposition}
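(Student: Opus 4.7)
My plan is to verify the stated matrices row by row against the commutative diagram in Proposition \ref{galois action divisor prop}(iii). Pulling the relation $M_g \circ (g\cdot l) = l \circ \phi_{R_g}$ back by $\phi_{R_g}$, the $i$-th row of $M_g$ encodes the identity of rational functions on $E$
\[
l_i(P + R_g) \;=\; \sum_{j=1}^p (M_g)_{ij}\, g(l_j)(P),
\]
so the proposition reduces to explicit equalities of such functions for $g = \sigma$ and $g = \tau$.

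For $g = \tau$: since $\tau(P) = P$ and $E(L)[p] = 0$, the relation $p R_\tau = \tau(P) - P = 0$ forces $R_\tau = 0_E$, so $\phi_{R_\tau}$ is trivial. Using $\tau(R_{\sigma^k}) = R_{\sigma^{p-k}}$ from Lemma \ref{cocycle point lemma}(iii), one reads off $\tau(l_k) = l_{p-k}$ for $1 \le k \le p-1$; the required identities $l_i = \tau(l_{p-i})$ and $l_p = \tau(l_p)$ are then immediate, giving the stated permutation matrix $M_\tau$.

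For $g = \sigma$: the row-$p$ case $l_p(P + R_\sigma) = \sigma(l_p)(P)$ is simply $1 = 1$. For row $1$, the cocycle relation yields $\sigma(R_{\sigma^{p-1}}) = -R_\sigma$, so $\sigma(l_{p-1})(P) = (y(P) - y_\sigma)/(x(P) - x_\sigma)$; this is the slope of the line through $P$ and $R_\sigma$, which by collinearity with $-(P + R_\sigma)$ is also $-l_1(P + R_\sigma)$, yielding $l_1(P + R_\sigma) = -\sigma(l_{p-1})(P)$ as required.

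The main obstacle is the middle rows $2 \le k \le p-1$, which assert
\[
l_k(P + R_\sigma) \;=\; \sigma(l_{k-1})(P) - \sigma(l_{p-1})(P) + c_k.
\]
I would prove this by a pole-residue comparison followed by evaluation at $0_E$. Setting $S_{k-1} = R_{\sigma^k} - R_\sigma$, the cocycle relation identifies $\sigma(R_{\sigma^{k-1}}) = S_{k-1}$, so both sides have pole support contained in $\{-R_\sigma, S_{k-1}\}$: the right side has no pole at $0_E$ because $\sigma(l_{k-1})$ and $\sigma(l_{p-1})$ share the residue $-1$ there with respect to the uniformizer $t = -x/y$. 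Residues at the two finite poles match because translation by $R_\sigma$ preserves the invariant differential $\omega = dx/y$, so pulling back a uniformizer under $\phi_{R_\sigma}$ rescales it by a ratio of $y$-coordinates, cancelling the factors $2 y_\ast$ that appear in the residues of the $l_i$. Hence the two sides differ by a constant in $L$, and evaluating at $P = 0_E$ identifies it: the left becomes $l_k(R_\sigma) = c_k$, while a direct expansion in $t$ shows $\sigma(l_{k-1})$ and $\sigma(l_{p-1})$ have the same (regular) value at $0_E$, so the right becomes $c_k$. Reading off the resulting entries recovers the matrix $M_\sigma$ in the statement.
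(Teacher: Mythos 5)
Your proposal is correct and follows the same overall skeleton as the paper's proof: the same reduction to the row identities $\phi^{*}_{R_\sigma}(l_k)=\sigma(l_{k-1})-\sigma(l_{p-1})+c_k$, $\phi^{*}_{R_\sigma}(l_1)=-\sigma(l_{p-1})$ and $\tau(l_k)=l_{p-k}$ (with $R_\tau=0_E$), and the same endgame of showing the difference of the two sides is pole-free, hence constant, and evaluating at $0_E$. Where you differ is in how the local cancellations are certified. The paper packages them into Lemma \ref{lemma for computing Msigma}, proved by exhibiting functions with no zeros or poles away from a single point and invoking constancy (and it only needs to kill the poles at $0_E$ and $\sigma(R_{\sigma^{k-1}})$, since a function with at most one simple pole on $E$ is already constant); you instead compute residues of $f\omega$ and use translation-invariance of the invariant differential, and you replace the paper's identity $\phi^{*}_{R}(l_R)=-l_{-R}$ for the first row by a direct collinearity/slope argument with the triple $P$, $R_\sigma$, $-(P+R_\sigma)$, which is arguably cleaner. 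Two small points of care if you write this up: at the pole $-R_\sigma$ the residues of $\phi^{*}_{R_\sigma}(l_k)\,\omega$ and $\sigma(l_{p-1})\,\omega$ are $-1$ and $+1$ respectively, so they cancel only because $\sigma(l_{p-1})$ enters the identity with a minus sign (saying the residues ``match'' glosses over this); and the phrase ``$\sigma(l_{k-1})$ and $\sigma(l_{p-1})$ have the same (regular) value at $0_E$'' should be restated as ``their difference extends regularly to $0_E$ and vanishes there,'' since each function individually has a simple pole at $0_E$ --- this is exactly Lemma \ref{lemma for computing Msigma}(iii).
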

		
	We need the following lemma.
		\begin{lemma} \label{lemma for computing Msigma}
			Let $E$ be an elliptic curve over a field $k$. For any $P=(x_P,y_P) \in E(k)$ different from $0_E$, let $l_P=\frac{y+y_P}{x-x_P} \in k(E)$. Let $P_1,P_2 \in E(k)$ be points such that $P_1 \ne -P_2$.
			
			\begin{enumerate}[label=(\roman*)]
			    \item 
		Define $f_{P_1,P_2}=\frac{\phi^{*}_{P_2}(l_{P_1+P_2})}{l_{P_1}} \in k(E)$. Then $f_{P_1,P_2}$ is regular at $P_1$ and we have $f_{P_1,P_2}(P_1)=1$.
		\item  For any $R \in E(k)$, we have $\frac{\phi^{*}_{R}(l_R)}{l_{-R}}=-1$.
		\item For distinct $R_1,R_2 \in E(k)$, we have $(l_{R_1}-l_{R_2})(0_E)=0$
		\end{enumerate}
		\end{lemma}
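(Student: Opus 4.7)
The three parts are local statements about $l_P$ near its poles, and the plan is to address each with an appropriate local computation. For (i) I would use residue calculus with the translation-invariant differential $\omega = dx/y$; (ii) follows from a short geometric observation via the chord-tangent group law; (iii) follows from a Laurent expansion at $0_E$. The main obstacle will be (i), which requires careful use of translation invariance and a uniform residue computation across all points $P$ (including 2-torsion); once that is sorted, (ii) and (iii) are short.

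For (i), the key observation is that translation-invariance of $\omega$ gives
\[
\mathrm{Res}_{P_1}\bigl(\phi_{P_2}^*(l_{P_1+P_2})\,\omega\bigr) \;=\; \mathrm{Res}_{P_1+P_2}(l_{P_1+P_2}\,\omega),
\]
so I only need to show that $\mathrm{Res}_P(l_P\,\omega)$ takes the same value at every $P \ne 0_E$. A direct computation gives residue $2$: for $y_P \ne 0$ take $u = x - x_P$ as uniformizer and expand $y = y_P + O(u)$, so $\omega = du/y_P + O(u)\,du$ and $l_P = 2y_P/u + O(1)$; for 2-torsion $P$ use $y$ as uniformizer. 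It follows that $l_{P_1}$ and $\phi_{P_2}^*(l_{P_1+P_2})$ share the same leading Laurent coefficient at $P_1$ in any choice of local parameter, so their ratio is regular there with value $1$.

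For (ii), I would read $l_P(X) = (y(X) + y_P)/(x(X) - x_P)$ as the slope of the chord through $-P = (x_P, -y_P)$ and $X$. Then $\phi_R^*(l_R)(X) = l_R(X+R)$ is the slope of the chord through $-R$ and $X+R$, whose third intersection with $E$ is $-X$ by the group law; meanwhile $l_{-R}(X)$ is the slope of the chord through $R$ and $X$, with third intersection $-(X+R)$. These two chords are exchanged by the hyperelliptic involution $(x,y)\mapsto(x,-y)$, which negates slopes, yielding the identity $l_R(X+R) = -l_{-R}(X)$ in $k(E)$.

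For (iii), I will Laurent-expand $l_P$ at $0_E$ using the uniformizer $z = -x/y$. The Weierstrass equation yields $x = z^{-2} + O(z^2)$ and $y = -z^{-3} + O(z)$, and substitution into the formula for $l_P$ produces
\[
l_P \;=\; \frac{-z^{-3} + y_P + O(z)}{z^{-2} - x_P + O(z^2)} \;=\; -z^{-1} + O(z).
\]
The constant term vanishes and is independent of $P$, so $(l_{R_1} - l_{R_2})(0_E) = 0$.
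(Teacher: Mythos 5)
Your proofs are correct, and for parts (i) and (ii) you take a genuinely different route from the paper; part (iii) is essentially the paper's own computation (the paper combines the two fractions and compares pole orders of numerator and denominator at $0_E$, which is the same local estimate as your expansion in $z=-x/y$). For (i) the paper avoids local analysis entirely: it fixes $P_1$, regards $P \mapsto f_{P_1,P}(P_1)$ as a rational function of the second argument, observes that it has neither zeros nor poles on $E \setminus \{-P_1\}$, concludes it is constant, and evaluates at $P=0_E$. Your residue argument with the invariant differential is more computational but also more self-contained and uniform: it isolates the one fact that matters (every $l_P$ has residue $2$ against $\omega$ at its pole $P$, including at $2$-torsion, where you correctly switch uniformizer), and translation-invariance of $\omega$ does the rest; it also re-proves the regularity of $f_{P_1,P_2}$ at $P_1$, which the paper dismisses as ``clear''. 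For (ii) the paper uses Riemann--Roch on $\mathcal{L}((0_E)+(-R))$ to write $\phi_R^*(l_R)+c_R\, l_{-R}$ as a constant and then a specialization-in-$R$ argument to pin down $c_R=1$; your chord-slope reading of $l_P$, together with the fact that $(x,y)\mapsto(x,-y)$ carries the line through $\{-R,\,X+R,\,-X\}$ to the line through $\{R,\,-(X+R),\,X\}$ while negating slopes, gives the identity in one stroke and is arguably cleaner. Two small points to make explicit in a final write-up: the geometric argument in (ii) proves an identity of rational functions, so it suffices to verify it for $X$ in the dense open set where the three points are distinct and the chords are honest secants; and both your residue normalization and the formula $-P=(x_P,-y_P)$ presuppose a short Weierstrass model in characteristic different from $2$, which is the setting the paper actually works in.
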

		\begin{proof}
		\begin{enumerate}[label=(\roman*)]
		    \item 
			It is clear that $f_{P_1,P}$ is regular at $P_1$.  We define a rational function $g \in k(E)$ by $P \mapsto f_{P_1,P} (P_1)$. As $\frac{y+y_{P_1+P_2}}{x-x_{P_1+P_2}}$ has a simple pole at $P_1+P_2$, the function $\phi^{*}_{-P_2}(\frac{y+y_{P_1+P_2}}{x-x_{P_1+P_2}})$ has a simple pole at $P_1$, and therefore $g$ is regular with no zeros on the open set $E\setminus \{-P_1\}$. But the only such rational functions are the constant ones, and since $g(0_E)=1$, we deduce that $g=1$, and hence $f_{P_1,P_2}(P_1)=1$.
			
		   \item Note that both $\phi^{*}_{R}(l_R)$ and $l_{-R}$ have simple poles at $-R$ and $0_E$. The Riemann-Roch space $\mathcal{L}((0_E)+(-R))$ is 2-dimensional, and therefore there exists a $c_R \in k$ such that $\phi^{*}_{R}(l_R)+c_R\cdot l_{-R}$ is a constant function, i.e. the function on $E(k) \times E(k)$
			\[
			(P,R) \mapsto \frac{y_{P+R}+y_{R}}{x_{P+R}-x_{R}}+c_R\frac{y_{P}-y_{R}}{x_{P}-x_{R}},
			\]
			depends only on $R$, and so can be viewed as a rational function on $E$. It is clearly regular on $E\setminus\{0_E\}$, and therefore it must be constant. Since it does not have a pole at $0_E$, we have $c_R=1$.
		    \item We compute
	\begin{equation*}
		\begin{split}
		& l_{R_1}-l_{R_2}=\frac{y+y_{R_1}}{x-x_{R_1}}-\frac{y+y_{R_2}}{x-x_{R_2}}=\frac{(y_{R_1}+y_{R_2})x-(x_{R_1}+x_{R_2})y-(y_{R_1}x_{R_2}+x_{R_1}y_{R_2})}{(x-x_{R_1})(x-x_{R_2})}.
		\end{split}
		\end{equation*}
			The numerator has a pole of order 3 at $0_E$, the denominator has pole of order 4, and hence $l_{R_1}-l_{R_2}$ vanishes at $0_E$,
		\end{enumerate}
		\end{proof}
		\begin{proof}[Proof of Proposition \ref{formula for M prop}]
		    The matrix $M_{\sigma}$ is determined by the equation $\phi^{*}_{R_{\sigma}} (l)=M_{\sigma} \cdot  \sigma(l)$, where $l$ is the column vector $(l_1,\ldots,l_p)^{T}$. By Lemma \ref{cocycle point lemma}, we have $R_{\sigma^{p-2}}=\sum_{i=0}^{p-2} \sigma^i(R_{\sigma})=-\sigma^{p-1}(R_{\sigma})$  and $\sigma(R_{\sigma^k})=R_{\sigma^{k+1}}-R_{\sigma}$.  As $l_p=1$, we have $\sigma(l_p)=\phi^{*}_{-R_{\sigma}}(l_p)=1$, giving the last row of $M_{\sigma}$. The proposition amounts to proving that for $1 \leq k \leq p-2$ we have
		\[
		\phi^{*}_{R_{\sigma}}(l_{k+1})=\sigma(l_k)-\sigma(l_{p-1})+c_k l_p,
		\]
		as well as 
		\[
		\phi^{*}_{R_{\sigma}}(l_{1})=\sigma(l_{p-1}).
		\]
		Suppose first that $k<p-1$. Recall that we have assumed that the points $R_g$ are distinct, and note that $\phi^{*}_{R_{\sigma}}(l_{k+1})$ has simple poles at $R_{\sigma^{k+1}}-R_{\sigma}=\sigma(R_{\sigma^k})$ and $-R_{\sigma}$, $\sigma(l_k)$ has simple poles at $\sigma(R_{\sigma^k})$ and $0_E$, and $\sigma(l_{p-1})$ has simple poles at $\sigma(R_{\sigma^{p-1}})=-R_{\sigma}$ and $0_E$.
		
		Hence the function $\frac{\phi^{*}_{R_{\sigma}}(l_{k+1})}{\sigma(l_k)}$ is regular at the point $\sigma(R_{\sigma^k})$. By Lemma \ref{lemma for computing Msigma}, taking $P_1=\sigma(R_{\sigma^k})=R_{\sigma^{k+1}}-R_{\sigma}$ and $P_2=-R_{\sigma}$, we see that $\frac{\phi^{*}_{R_{\sigma}}(l_{k+1})}{\sigma(l_k)}(\sigma(R_{\sigma^k}))=1$.. As a consequence we see that the function $\phi^{*}_{R_{\sigma}}(l_{k+1})-\sigma(l_k)$ is regular at $\sigma(R_{\sigma^k})$. By Lemma \ref{lemma for computing Msigma}(iii),  $\sigma(l_k)-\sigma(l_{p-1})$ is regular at $0_E$ with $\sigma(l_k-l_{p-1})(0_E)=0$. Hence the rational function
		$\phi^{*}_{R_{\sigma}}(l_{k+1})-\sigma(l_k)+\sigma(l_{p-1})$ has no poles except perhaps a simple one at $-R_{\sigma}$, and therefore must be the constant $-c_k$. By evaluating at $0_E$, we find that 
		\[
		c_k=-(\phi^{*}_{R_{\sigma}}(l_{k+1})-\sigma(l_k)+\sigma(l_{p-1}))=-(\phi^{*}_{R_{\sigma}}(l_{k+1}))(0_E)=-l_{k+1}(R_{\sigma})=-\frac{y_{\sigma^{k+1}}+y_{\sigma}}{x_{\sigma}-x_{\sigma^{k+1	}}}.
		\]
		as desired. To prove that $\sigma(l_{p-1})=-\phi^{*}_{R_{\sigma}}(l_{1})$, note that in the notation of Lemma \ref{lemma for computing Msigma}, $\sigma(l_{p-1})=l_{-R_{\sigma}}$, and $l_{1}=l_{R_{\sigma}}$, and so we are done by the final assertion of Lemma \ref{lemma for computing Msigma}.
		
		The computation of $M_{\tau}$ is simpler. Note that $R_{\tau}=0_E$, and so the last row of $M_{\tau}$ is the assertion that $\tau(l_p)=l_p=1$. For the other rows, we need to show that $\tau(l_k)=l_{p-k}$. This follows from the identity $\tau(R_{\sigma^k})=R_{\sigma^{p-k}}$, which is the content of Lemma \ref{cocycle point lemma}(iii).
		\end{proof}
		The cocycle condition determines the other $M_{g}$ as follows: $M_{\sigma^k}=M_{\sigma}\sigma(M_{\sigma})\cdots\sigma^{k-1}(M_{\sigma})$, and $M_{\sigma^k\tau}=M_{\sigma^k}M_{\tau}$.

		\section{Minimization and reduction} \label{minred koly section}
		Proposition \ref{Galois descent prop} reduces the problem of computing a $p$-diagram representing the Kolyvagin class to linear algebra over $\Q$, since we now only need to compute a basis for the $p$-dimensional $\Q$-vector space $\mc{L}(D)^G$.  However, if we do not do this linear algebra carefully, the resulting diagram $C \subset \mb{P}^{p-1}$ will be defined by equations with enormous coefficients. Moreover, even just doing this linear algebra can be computationally very expensive.  
		
		The reason for this is that the Heegner point we start with is typically of very large height.  From the theory of \textit{minimization} and \textit{reduction} of genus one models, as developed in \cite{minred234}, \cite{minred5} and \cite{LazarThesis}, we know that every element of the $n$-Selmer group of $E$ can be represented by a minimal model, which is an $n$-diagram given by equations with 'nice' equations. To make this more precise, Theorem 1.2 of \cite{ncovbds} shows that the coefficients of these equations are integers bounded by a power of the naive height of $E$, for $2 \leq n \leq 4$. Another result that is similar in spirit, that holds all odd $n$, is Theorem 1.0.1 of \cite{LazarThesis}. 
		
	 We are free to modify a $p$-diagram $[C \xrightarrow[]{} \PP^{p-1}]$ by making a linear change of coordinates on $\PP^{p-1}$. In this section we explain how to choose such a coordinate change so that the Kolyvagin class $c$ is represented by a diagram defined by equations with small integer coefficents. This breaks up into two steps known as minimization and reduction. The minimization step finds a $\mathrm{GL}_p(\Q)$-transformation so that the diagram $[C \xrightarrow[]{} \PP^{p-1}]$ can be represented by an integral model which has nice reduction properties modulo each prime. The reduction step then finds a $\mathrm{GL}_p(\Z)$-transformation to make the coefficients of such a model as small as possible.

\subsection{Minimization}		
    \subsubsection{Toy example}
	We give first an informal overview of what goes wrong with the naive approach and how it can be fixed. Consider the following simpler problem. Let $E/\Q$ be an elliptic curve, let $n \geq 5$ be an odd integer, let $[P] \in E(\Q)/nE(\Q)$, and suppose we want to compute an $n$-diagram representing the class $\delta([P]) \in H^1(\Q,E[n])$. Let $y^2=x^3+ax+b$ be a Weierstrass equation $W$ for $E$, with $a,b\in \Z$.
	
	As explained in Section \ref{koly background}, we need to choose a degree $n$ divisor $D$ with $\mathrm{sum}(D)=P$ and compute a basis for the Riemann-Roch space $\mc{L}(D)$. A natural choice would be to take $D=(n-1)\cdot (0_E) + (P)$, and for the basis $l_1,\ldots,l_{n-1}$ of $\mc{L}(D)$ we can take $1,x,y,x^2,xy,\ldots,x^{\frac{n-1}{2}}$ together with $\frac{y+y_P}{x-x_P}$, if $P=(x_P,y_P) \in E(\Q)$, and $\delta([P])$ is represented by $C \subset{\PP}^{n-1}$, where $C$ is the image of $E$ under the map $Q \mapsto (l_1(Q):\ldots:l_n(Q))$. An integral model for $C$ is then determined by $n(n-3)/2$ quadrics that form the basis for the $\mb{Z}$-module of quadrics in integral coefficients that vanish on $C$.  
	
	The problem that arises is that, if $q$ is a prime of good reduction of $E$, for which the point $P$ maps to zero under the reduction map $E \xrightarrow{} \Tilde{E}$, i.e. $q$ divides the denominators of $x_P$ and $y_P$, then the above basis does not reduce to a basis of $\mc{L}(\widetilde{D})$, and it is not difficult to show that this implies that the integral model for $C$ reduces to a singular curve modulo $q$. If the point $P$ is of large height, then the primes $q$ can be very large, and in practice this forces the coefficients of $C$ to be large, since the discriminant invariant of the integral model of $C$, as defined in \cite{minred234}, \cite{minred5} when $n \leq 5$, is a non-zero integer divisible by $q$, and so at least $q$.
	
	In this case, the issue can be resolved as follows. We first replace $(n-1) 0_E+P$ by the linearly equivalent divisor $(n+1) \cdot (0_E)-(-P)$. Then $\mc{L}(D) \subset \mc{L}((n+1) \cdot 0_E)$. It follows from the Riemann-Roch theorem that $1,x,y,x^2,xy,x^{\frac{n-1}{2}},x^{\frac{n-3}{2}}y$ is a basis of $\mc{L}((n+1) \cdot 0_E)$. This is a nice basis, in the sense that if $q$ is a prime that does not divide the discriminant of $W$, then $1,\Tilde{x},\Tilde{y},\Tilde{x}^2,\Tilde{x}\Tilde{y},x^{\frac{n-1}{2}},\Tilde{x}^{\frac{n-3}{2}}\Tilde{y}$ is a basis of $\mc{L}(n \cdot 0_{\Tilde{E}})$. For the basis of $\mc{L}(D)$ we take $l_1,\ldots,l_{n}$ to be a $\Z$-basis of the module of spanned by those $\Z$-linear combinations of $1,x,y,x^2,xy,x^{\frac{n-1}{2}},x^{\frac{n-3}{2}}y$ that vanish at $-P$. Then $l_1,\ldots,l_{n}$ reduces to a basis of $\mc{L}(\Tilde{D})$ on the reduction $\Tilde{E}$. The curve $C \subset \PP^{n-1}$ defined by this embedding  has an integral model that reduces to a non-singular curve modulo any prime $q$ with $q \nmid \mathrm{Disc}(E)$, see \cite[Lemma~7.4.5]{LazarThesis}. In fact, if the Weierstrass equation $W$ is minimal, one can also show that this model is a minimal genus one model, in the sense of Theorem 4.1.1 of \cite{LazarThesis}.

	\subsubsection{Minimizing the Kolyvagin class}	Let us recall the setup of Section \ref{ndiag koly section}. We assume that we have the data specified in Lemma \ref{cocycle point lemma}: an elliptic curve $E/\mathbb{Q}$, an odd prime $p$, an imaginary quadratic field $K/\mathbb{Q}$, a cyclic extension $L/K$ of degree $p$, and a point $P \in E(L)$ such that $[P] \in (E(L)/pE(L))^G$, where $G=\mathrm{Gal}(L/\mathbb{Q})$. We also assume, for simplicity, that $L$ has class number one. This assumption holds in most of the examples we were able to compute in practice. From this data one can compute the points $R_{g} \in E(L)$ giving rise to the cocycle $g \mapsto R_g$ defined in Lemma \ref{cocycle point lemma}. 
		
		Fix a short Weierstrass equation $W$ of $E$, and let $R_g=(x_g,y_g)$ for $g \in G$. Recall that we have defined two divisors on $E$ 
		\[
		D=0_E+(R_{\sigma})+...+(R_{\sigma^{p-1}})  ,
		\]
		\[
		D'=(2p-1) \cdot 0_E-(-R_{\sigma})-...-(-R_{\sigma^{p-1}}).
		\]
    	
		Let $D'$ be the divisor $(2p-1) \cdot 0_E-(-R_{\sigma})-...-(-R_{\sigma^{p-1}})$. As $\mathrm{sum}(D')=\mathrm{sum}(D)$ and both $D'$ and $D$ have degree $p$, divisors $[D]$ and $[D']$ are linearly equivalent, and the Kolyvagin class $c_L \in H^1(L,E[p])$ is represented by the $p$-diagram $[E \xrightarrow[]{} \mb{P}^{p-1}]$ where the map is induced by the linear system $|D'|$. A choice of a rational function $f$ with $\mathrm{div}(f)=D-D'$ determines an isomorphism between the vector spaces $\mathcal{L}(D)$ and $\mathcal{L}(D')$, and we transport the action of $G$ to $\mathcal{O}_E(D')$ via such an isomorphism. The subset $\mc{L}(D')^{G}$ of $G$-invariant elements is a $p$-dimensional $\Q$-vector space. We can naturally view $\mathcal{L}(D')$ as a subspace of  $\mathcal{O}_E((2p-1) \cdot 0_E)$ that consists of functions that vanish at $-R_{\sigma},-R_{\sigma^2},...,-R_{\sigma^{p-1}}$.

		Note that $1,x,y,x^2,xy,...,x^{p-1},x^{p-2}y$ is a basis of $\mathcal{L}((2p-1) \cdot 0_E)$. Let $S \subset \mathcal{L}((2p-1) \cdot 0_E)$ be the free $\mathcal{O}_L$-module spanned by $1,x,y,x^2,xy,...,x^{p-1},x^{p-2}y$ and let $T=S \cap \mathcal{L}(D')$. Then the subset $T^{G}$ of invariant elements of $T$ is a $\mb{Z}$-module, and since we have $T^{G} \otimes \Q=\mc{L}(D')^{G}$, it is a free $Z$-module of rank $p$.
		
		Finally, let $l_1,\ldots,l_p$ be a basis of $T^{G}$ as a $\Z$-module. Then $l_1,\ldots,l_p$ is also a basis of $\mc{L}(D')^{G}$ as a $\Q$-vector space, and hence, by Proposition \ref{Galois descent prop}, determines an embedding $C \xrightarrow{} \PP^{p-1}$ that represents the Kolyvagin class $c$. This representation of $c$ is sufficiently nice for our purposes.   In fact, in \cite[Section~7.4]{LazarThesis}, we show that the reduction of this curve mod $q$ is non-singular for any prime $q$ that does not divide the discriminant of $E$ or the discriminant of $L$, and we conjecture that the resulting genus one model is minimal.

		\subsubsection{Practical computation of a minimal model} \label{practical minimization}
	    We now explain how to compute equations for the $p$-diagram defined above. This amounts to doing the linear algebra of Proposition \ref{Galois descent prop} over $\Z$, in a suitable sense.  We need to take some care with solving the resulting linear equations, since they have very large coefficients.
		
		\textbf{Matrix representation of a basis of $\mathcal{L}(D').$}	We have an inclusion $\mathcal{L}(D') \subset \mathcal{L} ((2p-1) \cdot 0_E)$. Let $e_1=1,e_2=x,e_3=y,...,e_{2p-2}=x^{p-1},e_{2p-1}=x^{p-3}y$ be the standard basis of $\mathcal{L}((2p-1)\cdot 0_E)$. For a basis $f_1,...,f_p$ of $\mathcal{L}(D')$, we can then write $f_i=\sum_{j=1}^{2p-1} A_{ij} e_{j}$ for some $A_{ij} \in L$, and from now on we identify the vector space $\mathcal{L}(D')$ with the span of the rows of $A$.

		Recall that we have defined a free $\mathcal{O}_{L}$-module $T$ as the subset of elements of $\mathcal{L}(D')$ that can be expressed as $\mathcal{O}_{L}$-integral linear combinations of $e_1,...,e_{2p-1}$. Under the above identification, elements of $T$ correspond to linear combination of rows with integral entries. 
		
		\textbf{Making the action of $\mathrm{Gal}(L/\mathbb{Q})$ explicit.}
		In Section 2 we have defined a basis of $\mathcal{L}(D)$ by setting $l_1=\frac{y+y_{\sigma}}{x-x_{\sigma}},...,l_{p-1}=\frac{y+y_{\sigma^{p-1}}}{x-x_{\sigma^{p-1}}},l_p=1$. For the function $f$ with $\mathrm{div}(f)=D-D'$, we take $f=\prod^{p-1}_{k=1}(x-x_{\sigma^{k}})$. Then $l'_i=f \cdot l_i$, $1 \leq i \leq p$, is a basis of $\mathcal{L}(D')$. Let $\alpha_1,...,\alpha_{2p}$ be a $\mathbb{Z}$-basis of $\mathcal{O}_{L}$, and consider $\mathcal{L}(D')$ as a $\mathbb{Q}$-vector space with the basis $\alpha_i f_j$, $1 \leq i \leq 2p$, $1 \leq j \leq p$. Recall the matrices $M_g \in \mathrm{GL}_p(L)$ computed in Proposition \ref{formula for M prop}. Let $N_g=M_g^{-1}$. By Proposition \ref{galois action divisor prop}, we have
		
		\[  \begin{pmatrix}
		g(l'_{1}) \\
		g(l'_{2}) \\
		\vdots \\
		g(l'_{p})
		\end{pmatrix}
		= N_{g} \cdot
		\begin{pmatrix}
		l'_{1} \\
		l'_{2} \\
		\vdots \\
		l'_{p}
		\end{pmatrix}.
		\]
		As the action of $G$ is semilinear, by multiplying the left and right sides by $g(\alpha_j)$ we obtain 
		\[  \begin{pmatrix}
		g(\alpha_j l'_{1}) \\
		g(\alpha_j l'_{2}) \\
		\vdots \\
		g(\alpha_j l'_{p})
		\end{pmatrix}
		=\frac{g(\alpha_j)}{\alpha_j} N_{g} \cdot
		\begin{pmatrix}
		\alpha_j l'_{1} \\
		\alpha_j l'_{2} \\
		\vdots \\
		\alpha_j l'_{p}
		\end{pmatrix}.
		\]
		For a general basis $f_1,...,f_p$ of $\mathcal{L}(D)$ we have a similar formula, with $N_g$ represented by $g(T)N_gT^{-1}$, where $T\in\mathrm{GL}_{p}(L)$ is the matrix relating the bases $l'_i$ and $f_i$.  	
		
		\textbf{Computing an integral basis of $T^{G}$.}
		Using the above formulas we can compute the row vector representation of $g(\alpha_i f_j)$ for each $i$ and $j$. Let $A_1 \in \mathrm{Mat}_{2p^2,2p-1}(L)$ be the matrix formed by the rows corresponding to the elements $\sum_{g \in G} g(\alpha_i f_j)$ for $1 \leq i \leq 2p$, $1 \leq j \leq p$. By Proposition \ref{galois action divisor prop}, the space $\mathcal{L}(D')^{G}$ is spanned by the rows of $A_1$. Note that $T^{G}=\mathcal{L}(D')^{G} \cap T$. 
		
		Next, the choice of basis $\alpha_1,...\alpha_{2p}$ determines an isomorphism $L \cong \mathbb{Q}^{2p}$, and hence an isomorphism $\mathrm{Mat}_{k,l}(L) \cong\mathrm{Mat}_{k,2pl}(\mathbb{Q})$ for any pair $k,l$. To be explicit, for each entry $z$ of $A_1$,  write $z=c_1\alpha_1+...+c_{2p}\alpha_{2p}$ for $c_i \in \mathbb{Q}$, and let $A_2 \in \mathrm{Mat}_{2p^2,2p(2p-1)}(\mathbb{Q})$ be the matrix obtained from $A_1$ by replacing each entry of $A$ by the associated $2p$-tuple $c_1,...,c_{2p}$. We say $A_2$ is obtained from $A_1$ by the restriction of scalars from $L$ to $\mathbb{Q}$.
		
		The space $T^G$ then corresponds to the $\mathbb{Z}$-sublatice of row vectors with integral entries in the $\mathbb{Q}$-span of rows of $A_2$, and finding a basis for such a lattice is a standard problem, which can be solved efficiently using Hermite normal form.

		We summarise the above discussion in the following algorithm:
		
		\begin{algorithm} \label{mini algortihm}
				\begin{itemize}
				\item[]
				\item INPUT: $E,D$, $p \geq 5$ and a point $P \in E(L)$ that satisfies the conditions of Lemma \ref{cocycle point lemma}.
				\item OUTPUT:  An integral model $\mathcal{C}$ of the class $c_{\mathbb{Q}}$.
			\end{itemize}  			
		\begin{enumerate}
			\item Compute the points $R_{g}$, and then the matrices $M_g \in \mathrm{GL}_p(L)$ using Proposition \ref{formula for M prop}.
			\item Choose a basis $f_1,...,f_p$ of $\mathcal{L}(D')$, and represent it by a matrix $A \in \mathrm{Mat}_{p,2p-1}(L)$. Use the formulas defining the Galois action to compute the matrix $A_1 \in \mathrm{Mat}_{2p^2,2p-1}(L)$ representing a set of generators of $\mathcal{L}(D')^G$, and its restriction-of-scalars representation $A_2 \in \mathrm{Mat}_{2p^2,2p(2p-1)}(\mathbb{Q})$, as described above. Let $V$ be the $\mathbb{Q}$-span of rows of $A_2$, and set $T'=\mathbb{Z}^{2p(2p-1)} \cap V$.
			\item Compute a basis of $T'$ as a matrix  $B' \in \mathrm{Mat}_{p,2p(2p-1)}(\mathbb{Z})$, and then compute the matrix $B \in \mathrm{Mat}_{p,2p-1}({\mathcal{O}_{L}})$ such that $B'$ is the restriction of scalars of $B$. We recover a basis of $T$ by setting $f^{G}_i=\sum_{j=1}^{2p-1}B_{ij} e_j$ for $1 \le i \le p$.
			\item Compute a basis $q_1,...,q_{p(p-3)/2}$ for the $\mathbb{Z}$-module of quadrics with $\mathbb{Z}$-coefficients vanishing on the image of $E$ in $\mathbb{P}^{p-1}$ under the map $e$ induced by $f^{G}_1,...,f^{G}_p$, and return as the model $\mathcal{C}$ the subscheme of $\mathbb{P}_{\mathbb{Z}}^{n-1}$ defined by the $q_i$.
		\end{enumerate}.
		\end{algorithm}	
	
		Two steps of the algorithm need further explanation. We need to explain how to compute the quadrics in Step (iv), which is straightforward and we  do first, and we need explain how to choose the basis in Step (ii), which is a subtler problem.

		\textbf{Step 4 of the algorithm.} Let $C_{\mathbb{Q}}$ be the image of $E$ in $\PP^{p-1}$ under the embedding $e$ and let $C_{L}$ be the base change of $C_{Q}$ to $L$. As $C_L$ is a genus one normal curve, so in particular projectively normal, the monomials $f^G_if^G_j$, $1 \leq i,j \leq p$, span the $2p$-dimensional $L$-vector space $\mathcal{L}(2D')$.
		
		Let $x_1,...,x_p$ be the coordinates on $\mathbb{P}^{p-1}$, and let $V_{\mathbb{Q}}$  be the $\mathbb{Q}$-space of all rational quadratic forms, spanned by the monomials $x_ix_j$, $1 \leq i,j \leq p$. We then define a $\mathbb{Q}$-linear map $j: V \xrightarrow{} \mathcal{L}(2D')$ by the rule $x_i x_j  \mapsto f^G_i f^G_j$. 
		
		The kernel of this map consists of all of the quadrics that vanish on $C_{\mathbb{Q}}$. We compute a matrix representing the map $j$, and then use linear algebra over $\mathbb{Z}$ to compute a set of generating quadrics $q_1,...,q_{p(p-3)/2}$ of $I(C_{\mathbb{Q}})$, with the property that they generate the $\mathbb{Z}$-submodule of integral quadrics that vanish at $C_{\mathbb{Q}}$.
		
		 When $p=3$, $C_{\mathbb{Q}}$ is defined by a single ternary cubic, and it is simple to adapt the above method to work in this case as well.

		\textbf{Picking a basis in Step 2.} A natural choice of basis of $\mathcal{L}(D')$, given the computation of Proposition \ref{formula for M prop}, would be $l'_1,...,l'_p$. This, however, does not lead to a practical algorithm. With this choice, computing the basis of $T'$ in Step 3 can be very time consuming, as the dimension of matrix $A_2$ grows quickly with $p$ and the entries of $A_2$ tend to be rational numbers of large height, as they were obtained from the coordinates of the Heegner point. We now describe a more careful way to choose a basis.
		
		We start by rescaling the basis $l'_i$. For legibility write $x_i=x_{\sigma^i}$ and $y_{\sigma^i}=y_i$. As $\mathcal{O}_{L}$ is a PID, it is a standard fact that we can write $x_i=\frac{r_i}{t_i^2}$ and $y_i=\frac{s_i}{t_i^3}$ for some  $r_i,s_i,t_i$, with $r_i,t_i$ and $s_i,t_i$ being pairs of coprime algebraic integers. For $1 \leq i \leq p-1$, we put
		
		\[
		f'_i=t_i \cdot t_{1}^2\cdots t^2_{p-1} \cdot l'_i=(t_1^2x-r_1)\cdots(t_{i-1}^2+r_{i-1})(t_i^3y+s_i)(t_{i+1}^2x+r_{i+1})\cdots(t_{p-1}^2x-r_{p-1}) 
		\]
		Having chosen this scaling, we see that $f'_i \in T$, i.e. the matrix $A'$ whose rows represent $f'_i$ have integral entries, and so do the corresponding matrices $A'_1$ and $A'_2$.
		
		 Our next step is motivated by the following heuristic. For $l<k<p$, we have $R_{\sigma^k}-R_{\sigma^l}=\sum^{k}_{i=1} \sigma^{i-1}(R_{\sigma})-\sum^{l}_{i=1} \sigma^{i-1}(R_{\sigma})=\sigma^{l}(R_{\sigma^{k-l}})$. If, for a prime $\gp$ of $\mathcal{O}_{L}$, the point $\sigma^{l}(R_{\sigma^{k-l}})$ reduces to $0_{\widetilde{E}}$, then $\widetilde{R_{\sigma^k}}=\widetilde{R_{\sigma^l}}$, and hence $\widetilde{f_k}=\widetilde{f_l}$. Hence $\gp$ will divide all entries of the difference $r_{k,l}$ of rows of $A'$ corresponding to $f_k$ and $f_l$. As the primes for which $\sigma^l(R_{\sigma^{k-l}})$ reduces to zero are exactly those that divide $\sigma^l(t_{k-l})$, we expect that the entries of $r_{k,l}$ and $\sigma^{k-l}(t_l)$ will have a large common divisor.
		
		To cancel out these divisors for all pairs of rows we use the following procedure, reminiscent of Gaussian elimination.  Let $r_1,...,r_p$ be the rows of $A'$. For $1 \leq  k \leq p-1$, consider the $2 \times (2p-1)$ submatrix $A^{k}$ of $A'$ formed by $r_k$ and $r_{p-1}$, and let $d_k$ be the generator of the ideal of $\mathcal{O}_{L}$ generated by the $2\times2$ minors of $A^{k}$. We then compute $c_k \in \mathcal{O}_{L}$ such that the entries of $r_{p-1}-c_{k}r_{k}$ are divisible by $d_k$ - this amounts to putting $A^k$ in Hermite normal form (over $\mathcal{O}_L$), which is possible since we assumed $\mathcal{O}_{L}$ is a PID, and can be done efficiently.
		
		 Next, compute a generator $D_k$ of the ideal $(d_{k},d_1\cdots d_{k-1}d_{k+1}\cdots d_{p-2})$, and find $i_k \in \mathcal{O}_{L} \frac{d_{k}}{D_{k}}$ and $j_k \in \mathcal{O}_{L} \cdot \frac{d_{k},d_1\cdots d_{k-1}d_{k+1}\cdots d_{p-2}}{D_{k}}$, with $i_k+j_k=1$ - this is also a standard problem, see  \cite{cohen1996hermite}. We then replace $r_{p-1}$ with $r'_{p-1}=r_{p-1}-j_1c_1 \cdot r_1-...-j_{p-2}c_{p-2} \cdot r_{p-2}$, and then divide $r'_{p-1}$ by the GCD of its entries. In practice, $D_k$ will often be a unit or at worst divisible by a few small primes, and so this GCD will be the product $d_{1,p-1}\cdots d_{p-2,p-1}$, up to a small factor. We then repeat this process for rows $r_1,...,r_{p-2}$, with $r_{p-2}$ taking the role of $r_{p-1}$, and so on.  
		
		At the end of this process we obtain a new matrix $A'' \in \mathrm{Mat}_{p,2p-1}(\mathcal{O}_{L})$ and $U \in \mathrm{GL}_{p}(L)$ with $A''=UA'$. We then take $f_1,...,f_p$ to be the basis of $\mathcal{L}(D')$ that corresponds to the rows of $A''$. To account for the change of basis, we replace $M_{g}$ by $UM_{g}g(U^{-1})$ for each $g \in G$, and then compute a basis $f^{G}_1,...f^{G}_p$ of $T^{G}$ using the approach described for $A$.
		
			\subsection{Reduction} \label{reduction section}
	The final step is to find a $\mathrm{GL}_{p}(\Z)$-change of coordinates making the coefficients of the equations defining $C \subset \mb{P}^{p-1}$ as small as possible. For this, we use the method of reduction, developed in Section 6 of \cite{minred234}. This method extends with minimal changes to our setting, so we give a very brief summary. 
	
	To compute a reduced $p$-diagram equivalent to the diagram $[C \subset{} \PP^{p-1}]$ representing the Kolyvagin class, we first compute the reduction covariant $\phi(C)$, according to the recipe given in Section 6 of \cite{minred234}. The reduction covariant is a certain symmetric positive definite matrix, well-defined up to a scalar in $\mb{R}^{\times}$, one associates to a $p$-diagram $[C \subset \PP^{p-1}]$ defined over $\mb{R}$, which transforms in a natural way under linear changes of coordinates on $\PP^{p-1}$.  Computing it amounts to computing the set of flex points of $C$, i.e. points $P \in C(\mb{C})$ with the property that the tangent hyperplane at $P$ meets $C$ only at $P$. In our case this is is easy, since we have a description of $C$ as an embedding of $E$ via the complete linear system $|D'|$. We then use the LLL algorithm to compute a $g \in SL_n{\mb{Z}}$ such that $g^{-t} \phi(C) g^{-1}$ is LLL-reduced, and replace $C$ with $g(C)$. For more details on our implementation, see Section 7.4.3 of \cite{LazarThesis}.

		\section{Examples} \label{koly example section}
		
		In this section we apply the theory we developed to concrete examples, and construct elements of $p$-torsion subgroups of Tate-Shafarevich groups, for $p \leq 11$ an odd prime. 
		
		As mentioned in the introduction, these computations are of the most interest when the prime $p$ is at least 7, since for $p \leq 5$ the usual method of $p$-descent works quite well for computing these examples. As a warmup, we compute an element of $\Sha(E/\Q)[3]$ for the curve $E$ labelled $681b3$ in Cremona's tables. We then follow with our main result, explicit equations representing an element of $\Sha(E/\mathbb{Q})[p]$ for $p=7$, where $E$ is the curve $3364c1$. 
		
		 Note that we can't apply the theory we developed in Section \ref{koly background} to   the Kolyvagin class $c_{\Q}$ directly. We defined this class as the image of the class $[D_{\sigma} z_K] \in E(L)/pE(L)$, and the point $D_{\sigma} z_K$ need not satisfy the conditions of Proposition \ref{cocycle point lemma}, since it is not necessarily fixed by complex conjugation $\tau$. However, since $c_{\Q}$ is in the $\pm$-eigenspace of $H^1(\Q,E[p])$, where $\pm$ is the sign of the functional equation of $E$,  we have $2\cdot [D_{\sigma} z_K]=[D_{\sigma} z_K \pm \tau D_{\sigma} z_K] \in (E(L)/pE(L))^{G}$, and so the point $P=D_{\sigma} z_K \pm \tau D_{\sigma} z_K$ satisfies the conditions of Prop. \ref{cocycle point lemma}, and we compute the Kolyvagin class associated to this point.  We can then recover the $\mb{F}_{p}$-line spanned by $c$ in $H^1(\Q,E[p])$, since we can use Algorithm \ref{mini algortihm} to compute a $p$-diagram representing the class $[mP]$ for any $m \in \Z$.
		 
		 Recall that by Proposition \ref{cocycle point lemma}(ii), we have $[P]=[D_{\sigma}R_{\sigma}]$, with $R_{\sigma}=\frac{\sigma(P)-P}{p}$. In practice, the point $R_{\sigma}$ is of much smaller height than $z_K$, and it is simple to adapt Algorithm \ref{heegner point algorithm} to compute this point directly. However, a drawback is that there are $p^2$ possible $p$-division points of the point $\sigma(P)-P$ in $E(\mb{C})$, and only one of them is the point $R_{\sigma}$. Thus we use Algorithm \ref{heegner point algorithm} on each division point successively, until the algorithm returns a point. If the height of the Heegner point is very large, like in our $p=7$ example, then this is worth doing, however if $p$ is large and the height of the point is small, like in our $p=11$ example, then we compute $P$ first to avoid slowing down the code.

		 For all of our computations we have used the computer algebra system MAGMA (\cite{bosma1997magma}). The source code, along with further examples we have computed, is available as a GitHub repository at \url{https://github.com/lazaradicevic/kolyvagin.classes}.

		\begin{example}
			Consider the elliptic curve $E$ 
		labelled 681b3 in Cremona's tables. $E$ has no rational 3-isogeny 
		and $\Sha(E)[3]=(\mathbb{Z}/3\mathbb{Z})^2$. There are 
		no elliptic curves of smaller conductor with this property, so $E$ is a 
		natural first candidate for us. $E$ is defined by the minimal Weierstrass equation
		
		\[
		y^2 + xy = x^3 + x^2 - 1154x - 15345
		\]
		
		 For our Heegner discriminant, we choose $D=-107$. The conductor of $E$ is $N=3\cdot 227$, and one verifies that $3$ and $227$ split completely in $K=\mathbb{Q}(\sqrt{-107})$, so $D$ satisfies the Heegner hypothesis. 
		
		For our field $L$, we take the Hilbert class field of $K$. As $K$ has 
		class number 3, by class field theory $L/\mathbb{Q}$ is a dihedral 
		extension of degree 6. We use the machinery implented in MAGMA to find 
		that $L=\mathbb{Q}[\alpha]$, where the minimal polynomial of $\alpha$ 
		is $x^6 - 2x^5 - 2x^3 + 30x^2 - 52x + 29$, and that $L$ has class number 1. We fix an ideal $\mc{N}$ with $\mc{N}\bar{\mc{N}}=N\mc{O}_K=681\mc{O}_K$. Let $z_K \in E(H)$ be the Heegner point that is the image of the point $(\mc{O}_K,[\mc{O}_K],\mc{N})$. There are 4 possible choices for $\mc{N}$, corresponding to the factorization $N=3\cdot 227$. Which one we choose is not important for the purpose of constructing non-trivial Kolyvagin classes, since changing the choice of $\mc{N}$ replaces $z_K$ by $\pm z_K+T$, where $T \in E_{tors}(\mb{Q})$. See Proposition 5.3 of \cite{gross}.
		
Using Algorithm \ref{heegner point algorithm}, slightly modifed as explained above, we compute the point $R_{\sigma} \in E(L)$. Its $x$-coordinate  is 
		\begin{align*}
		&	1/1741682413263770958143450(483403026915311979182787081\alpha^5 + \\&
			35453825605498566073743810\alpha^4 - 137498458568104949011766487\alpha^3 -\\& 
			2452468960182058461987679215\alpha^2 + 9038525365115044024894770546\alpha - \\&
			3473956084362757366189406163).
		\end{align*}
Next, we run the Algorithm \ref{mini algortihm} up to Step 3, computing rational functions $l_1,l_2$ and $l_3$ that form an invariant basis of $\mc{L}(5\cdot 0_E-R_{\sigma}-R_{\sigma^2})$. Let $C$ be the image of $E$ in $\mathbb{P}^2$ under the map $(x,y) \mapsto (l_1(x,y) : l_2(x,y) :l_3(x,y))$. Step 4 of Algorithm \ref{mini algortihm} does not apply in this case, since $C$ is defined by a ternary cubic rather than by quadrics. However it is easy to compute a cubic $G$ defining $C$, using the standard algorithms implemented in MAGMA: 
\begin{align*}
G=& \ 2372x^3 + 4174x^2y - 3043x^2z + 2340xy^2 - 3457xyz + 1271xz^2 + \\& \ 
419y^3 - 940y^2z + 700yz^2 - 173z^3
\end{align*}
The next step is to reduce $G$. Making the change of coordinates corresponding to the matrix
\[
\begin{pmatrix}
   - 1& -5& -9\\
0&  3&  4\\
0&  1& 1
\end{pmatrix},
\]  
replaces the cubic $G$ by 
\[
F=x^3 + 2x^2y - 3x^2z - xy^2 + 9xyz - 8xz^2 + y^3 - 11y^2z - 5yz^2 +
6z^3
\]
The cubic $F$ is minimal, in the sense of Definition 3.1 of \cite{minred234}, and is essentially as nice as of an equation as we can hope. Let $C \subset \mathbb{P}^2$ be the curve defined by $F$. The diagram $[C \xrightarrow{} \mathbb{P}^2]$ is the representation of the class $c_{\mathbb{Q}} \in H^1(\mathbb{Q},E[3])$ that we set out to obtain.

Note that as Algorithm \ref{heegner point algorithm} does not prove that the point we computed is the Heegner point, see Remark \ref{remark verified heegner}, we still need to prove that this class is in the 3-Selmer group $\mathrm{Sel}^{(3)}(E/\mathbb{Q})$, i.e. that the curve $C$ is everywhere locally soluble. Since we have represented $c$ by the ternary cubic $F$, we can use the standard algorithms  for genus one models implemented in MAGMA to do so.

Finally, to check that the image of $c_{\mathbb{Q}}$ is a non-trivial element of $\Sha(E/\mathbb{Q})$, note that $E$ has rank 0 and $E(\mathbb{Q})[3]$ is trivial. Hence $E(\mathbb{Q})/3E(\mathbb{Q})$ is trivial, and we only need to show that $c_{\mathbb{Q}}$ is non-zero. Thus it suffices to check that the class $[D_{\sigma}R]$ is non-zero in $E(L)/3E(L)$, i.e. that $D_{\sigma} \cdot R_{\sigma}$ is not divisible by $3$, and it is easy to check that this is indeed the case. Hence $C(\mathbb{Q})$ is empty, and $C$ is a counterexample to the Hasse principle. 

\begin{remark} \label{capitulation 3 remark}
One can easily find an equation for $c_{\mb{Q}}$ using the method of $3$-descent. However, our method gives us an additional piece of information - we know that the class $c_{\mb{Q}}$ capitulates over the field $L$, i.e. the curve $C$ admits an $L$-rational point. By construction of our model for $[C \xrightarrow{} \mb{P}^{n-1}]$, we know that the images of the points $R_{\sigma^{i}}$, where $0 \leq i \leq p-1$, under the embedding $E \xrightarrow{l} \mb{P}^{p-1}$, lie on the intersection of the curve $C$ and a hyperplane $H$ defined over $\mb{Q}$. We can compute an equation for $H$ using linear algebra, since $p$ points uniquely determine a hyperplane in $\mb{P}^{p-1}$. In our case, we find 
\[
H= 771x -2818y+ 4751z.
\]
Hence, the binary cubic obtained by substituting $z=-771/4751x + 2818/4751y$ in $F$ splits as a product of three distinct linear forms over $L$. 
\end{remark}
 \end{example}

\begin{example} \label{7 example}
	Let $E$ be the curve labeled 3364c1 in Cremona's tables, defined by a minimal Weierstrass equation $y^2 = x^3 - 4062871x - 3152083138$. Similarly to the previous examples, we chose $E$ because it is the smallest rank 0 curve with no rational 7-isogeny and $\Sha(E/\mathbb{Q})[7] \cong (\mathbb{Z}/7\mathbb{Z})^2$.
	
	For the Heegner discriminant, we take $D=-71$, which has class number 7. The Hilbert class field $L$ of $K=\mb{Q}(\sqrt{-71})$ is a degree 14 dihedral extension of $\mb{Q}$, defined by
	\begin{align*}
	f= \ &x^{14} + 7x^{13} + 25x^{12} + 59x^{11} + 103x^{10} + 141x^{9} + 159x^{8} + 153x^7 + \\&
	129x^6 + 95x^5 + 58x^4 + 27x^3 + 10x^2 + 3x + 1;
	\end{align*}
	The class group of the Hilbert class field $L$ of $K$ is trivial, so $\mathcal{O}_{L}$ is a PID. We compute the point $R_{\sigma}$ directly, using 250 digits of precision in the computation of modular parametrization, and finding a point of height $194.99$. This calculation took less than a minute with our MAGMA implementation. Unfortunately the results of the computation are too large to give here, as the $x$-coordinate of $R_{\sigma}$ would take several pages to print,  so we refer the reader to our GitHub repository.

	As before, we use the algorithm of Section \ref{practical minimization} to compute the rational functions $l_1,l_2,\ldots,l_7$ that define an embedding $E \xrightarrow{} \mathbb{P}^6$ over $L$. The image of the embedding is a curve $C$, that admits over $\mathbb{Q}$. We compute a basis for the 14-dimensional space of quadrics that cut out  $C$ in $\mb{P}^{6}$. The coefficients of all of the equations are remarkably small. We give the equations below:

\small
\begin{align*}f_{1}&= 2x_1x_2 - 2x_1x_3 + 2x_1x_5 - x_1x_6 + x_2^2 - x_2x_3 - 
	x_2x_4 + 2x_2x_5 - 4x_2x_6 + 2x_2x_7 - 3x_3^2 - 3x_3x_5\\&
	- x_3x_6 + 4x_4^2 - 2x_4x_5 + 3x_4x_6 - x_4x_7 - 3x_5^2 - 
	4x_5x_6 - x_5x_7 + 2x_6^2 - x_6x_7 - 2x_7^2,\\
	f_{2}&=x_1x_3 + x_1x_4 + x_1x_5 + 2x_1x_6 - x_1x_7 + 4x_2x_4 - 
	x_2x_5 + 2x_2x_6 - 2x_2x_7 - x_3^2 + 4x_3x_4\\& - 2x_3x_5 +
	5x_3x_6 - 3x_3x_7 + x_4^2 + 2x_4x_5 - x_4x_6 - 4x_4x_7 +
	3x_5x_6 - 2x_5x_7 + 2x_6^2,\\f_{3}&=x_1^2 + 4x_1x_3 + x_1x_5 + x_1x_6 - 4x_1x_7 - x_2x_3 + 
	3x_2x_4 - 2x_2x_5 + x_2x_6 - x_2x_7 + x_3^2 - 2x_3x_4 + \\&
	x_3x_5 + x_3x_6 - x_3x_7 - x_4^2 + x_4x_5 + x_4x_6 + 
	3x_4x_7 + x_5^2 + 2x_5x_6 - 2x_5x_7 + 5x_6^2 - 5x_7^2,\\
	f_{4}&=x_1^2 + 4x_1x_3 + 2x_1x_4 + x_1x_5 + x_1x_6 - 6x_1x_7 + 
	x_2^2 + x_2x_3 + 2x_2x_4 + x_2x_5 + x_2x_6 + 2x_3^2 + \\&
	x_3x_4 + x_3x_5 + 2x_3x_6 - 3x_3x_7 + x_4^2 - 2x_4x_5 + 
	2x_4x_6 - 3x_4x_7 + 2x_5^2 + 2x_5x_6 + x_5x_7 + x_6^2 -\\& 
	4x_6x_7 - x_7^2,\\f_{5}&=x_1x_2 - 3x_1x_3 - x_1x_4 - x_1x_5 - 2x_1x_6 + 5x_1x_7 - 
	x_2^2 - x_2x_3 - 3x_2x_4 - x_2x_5 - 2x_2x_6 +\\& 3x_2x_7 + 
	x_3x_5 - 3x_3x_7 + 3x_4^2 - 3x_4x_6 - 2x_4x_7 - 2x_5^2 - 
	x_5x_6 - 5x_6^2 - 2x_6x_7 - 3x_7^2,\\f_{6}&=2x_1^2 + x_1x_2 + x_1x_3 + 2x_1x_4 - x_1x_6 - 5x_1x_7 + 
	2x_2x_4 + x_2x_5 + 4x_2x_6 - x_2x_7 - x_3^2 + 4x_3x_5 + \\&
	2x_3x_6 + 2x_3x_7 + 2x_4^2 - x_4x_5 + 2x_4x_6 + 3x_5^2 + 
	x_5x_6 - 4x_5x_7 - 3x_6^2 - x_6x_7 - 4x_7^2,\\
		f_{7}&=x_1^2 + x_1x_2 + x_1x_3 + 2x_1x_4 - 3x_1x_5 + x_1x_6 - 
	3x_1x_7 - x_2^2 - 2x_2x_3 - 4x_2x_5 + x_2x_6 - 2x_2x_7 +\\&
	2x_3^2 - x_3x_4 - 3x_3x_6 - x_3x_7 - x_4^2 - x_4x_5 - 
	x_4x_6 - x_4x_7 - x_5^2 - 3x_5x_6 + 5x_5x_7 - 2x_6^2 - 
	3x_6x_7 \\&+ 5x_7^2,\\
\end{align*}
\begin{align*}
	f_{8}&=x_1^2 + 3x_1x_2 + x_1x_3 + 4x_1x_5 + x_1x_7 - 2x_2^2 + 
	2x_2x_3 + x_2x_4 - x_2x_5 + 3x_2x_6 + 4x_2x_7 + x_3^2 + \\&
	2x_3x_4 + 4x_3x_5 + x_3x_6 - x_3x_7 + 3x_4^2 + 3x_4x_5 -
	x_4x_7 - 3x_5^2 + 2x_5x_6 - 4x_5x_7 + x_6^2 - x_6x_7\\& - 
	4x_7^2,\\
	f_{9}&=x_1^2 + x_1x_2 + x_1x_3 - 2x_1x_4 + 3x_1x_5 - 2x_1x_6 + 
	x_1x_7 - x_2x_4 + x_2x_5 - x_2x_6 + 4x_2x_7 - x_3^2 - \\&
	2x_3x_4 + 3x_3x_5 + 2x_3x_6 + 4x_3x_7 + x_4^2 - 5x_4x_6
	- x_5^2 + 5x_5x_6 + 2x_5x_7 - 3x_6^2 + 3x_6x_7 - 2x_7^2,\\
	f_{10}&=2x_1x_2 - 3x_1x_3 - 2x_1x_5 - x_1x_6 + 2x_1x_7 - 3x_2^2 + 
	x_2x_3 - 4x_2x_5 + 4x_2x_6 + 2x_2x_7 + x_3^2 +\\& 2x_3x_4 -
	x_3x_6 - 7x_3x_7 + 4x_4^2 + x_4x_5 + x_4x_6 - 2x_4x_7 - 
	x_5^2 + 2x_5x_6 + 2x_5x_7 - x_6^2 + x_6x_7,\\
	f_{11}&=x_1^2 + x_1x_2 + 3x_1x_3 + x_1x_4 + 2x_1x_5 - 2x_1x_6 - 
	5x_1x_7 + 3x_2^2 - x_2x_3 + x_2x_4 + x_2x_5 \\&- 6x_2x_6 -
	4x_2x_7 + x_3^2 - 3x_3x_4 + 3x_3x_5 - 3x_3x_6 - x_3x_7 -
	2x_4x_5 - 2x_4x_6 - x_4x_7 - 2x_5x_6\\& - 2x_5x_7 - 
	x_6x_7 - 2x_7^2,\\
	f_{12}&=x_1^2 - 2x_1x_2 + 4x_1x_3 - x_1x_4 - 2x_1x_5 + x_1x_6 + 
	x_1x_7 - x_2^2 + x_2x_3 - 4x_2x_4 + 2x_2x_5 - x_2x_6 + \\&
	4x_2x_7 + x_3^2 + 5x_3x_6 - 2x_4^2 - 2x_4x_5 - 4x_4x_6 - 
	2x_4x_7 + 2x_5^2 + x_5x_6 + 4x_5x_7 - x_6^2 + 3x_6x_7\\& + 
	4x_7^2,\\
	f_{13}&=3x_1x_2 - x_1x_3 + 2x_1x_4 + 3x_1x_5 + x_1x_6 + x_2^2 + 
	2x_2x_4 + x_2x_5 - 2x_2x_7 + x_3^2 - x_3x_4 +\\& 4x_3x_5 - 
	5x_3x_6 + 3x_3x_7 + 4x_4^2 - 5x_4x_5 + 2x_4x_6 - x_4x_7
	- x_5^2 - 6x_5x_6 - x_5x_7 - 2x_6^2 - x_7^2,\\
	f_{14}&=2x_1^2 + 3x_1x_2 - x_1x_3 + 3x_1x_7 - 2x_2^2 - 2x_2x_3 - 
	x_2x_4 + x_2x_5 + 4x_2x_6 + 3x_2x_7 - 2x_3^2\\& + 4x_3x_4 -
	4x_3x_5 + 2x_3x_6 + 4x_4^2 + x_4x_5 + 2x_4x_7 - 4x_5^2 - 
	5x_5x_6 - 4x_5x_7 - 7x_6^2 + x_6x_7.
\end{align*}
\normalsize

We need to check that the curve $C$ is everywhere locally soluble, and so represents an element of the $7$-Selmer group. Testing a genus one curve for local solubility is a well-studied problem, and we use the standard method do this. For any prime $q$ let $\widetilde{C}$ be the curve over $\mb{F}_{q}$ defined by the reduction of the above quadrics modulo $q$. By Hensel's lemma (as stated in Proposition 5, Section 2.3 of \cite{bosch2012neron}), a smooth point in $\widetilde{C}(\mb{F}_{q})$ lifts to a point $Q \in C(\mb{Q}_q)$, and so we only need to show that $\widetilde{C}$ has a smooth $\mb{F}_{q}$-point for all $q$. By  Proposition 7.4.5 of \cite{LazarThesis}, $\widetilde{C}$ is non-singular if $q$ does not divide the discriminant of $E$ or the discriminant of the field $L$. Informally, it follows from the proof of \ref{semilinear hilbert 90} that for such primes the basis $l_1,\ldots,l_p$ of $\mc{L}(D')$ reduces to a basis of $\mc{L}(\widetilde{D'})$ and so defines an isomorphism $\widetilde{E} \cong \widetilde{C}$, and hence $\widetilde{C}$ is smooth. A standard result (Lang's theorem) then implies that $\widetilde{C}(\mb{F}_{q})$ is non-empty. For the remaining primes $2,29$ and $71$, we find a smooth point in $C(\mb{F}_{q})$ by a naive search. 

Finally, to show that $C(\Q)$ is empty, we only need to check that $c_{\Q}$ is non-trivial, since $E$ is of rank zero and $E[7](\Q)$ is trivial. It suffices to show that $c_{L}=[D_{\sigma}R_{\sigma}] \in E(L)/7E(L)$ is non-trivial, i.e. that $D_{\sigma}R_{\sigma}$ is not divisible by 7, and to show this, it suffices to find a prime $\gp$ of $L$ such that the reduction $\widetilde{D_{\sigma}R_{\sigma}}$ is not divisible by $7$ in $\Tilde{E}(\mb{F}_{\gp})$. A naive search quickly shows that this is true if $\gp$ is a prime lying above 47.

We have also computed the equation of the hyperplane $H$ passing through the (images of) points $R_{\sigma^{i}}$ on $C$. The largest coefficient of the equation has 213 digits.

\end{example}
\begin{example} \label{11 example}
Finally, another interesting example we have computed is a non-trivial element of $\Sha(E_D/\Q)[11]$, where $E_D$ is the quadratic twist of the curve $37a1$ by $D=-2731$. The curve $E$ has rank 1, and this is the smallest value of $D$ for which the BSD conjecture predicts that $\Sha(E_D/\mb{Q})[11]\cong (\mb{Z}/11\mb{Z})^2$, the Heegner condition is satisfied, the curve $E_D$ is of rank 0, and the class number of $\mb{Q}(\sqrt{D})$ is equal to 11. The $11$-diagram that represents this class is a curve in $\PP^{10}$ defined by 44 quadrics, and so is impractical to print here, even though the coefficients are small integers - at most 10 in absolute value. In this rank one case, the Heegner point has much smaller height, and so the precision to which we need to compute the modular parametrization of the curve is much smaller. The class $c_L$ is given as $[D_{\sigma}R] \in E(L)/11E(L)$, where $R$ is a point of height approximately 3.812. In contrast, for the curve 8350c1, which is the smallest curve in Cremona's tables with $\Sha(E/\Q)[11]$ non-trivial, no 11-isogeny and rank zero, we have not succeeded in computing a Heegner point for any of the first few Heegner discriminants $D$ for which $p| \mathrm{Cl}(D)$.
\end{example}

\bibliographystyle{amsalpha}
\bibliography{references}
\end{document}